\newcommand{\R}{\mathbbm{R}}
\newcommand{\Z}{\mathbbm{Z}}
\newcommand{\RNonNeg}{\mathbbm{R}_+}
\newcommand{\ints}[1]{[{#1}]}
\DeclareMathOperator{\bigOOp}{O}
\newcommand{\bigO}[1]{\bigOOp({#1})}
\newcommand{\unitVec}[1]{\mathbbm{e}_{#1}}
\newcommand{\oneVec}[1]{\mathbbm{1}_{#1}}
\newcommand{\zeroVec}[1]{\mathbb{O}_{#1}}
\newcommand{\scalProd}[2]{\langle{#1},{#2}\rangle}
\newcommand{\setDef}[2]{\{{#1}:{#2}\}}
\newcommand{\symGr}[1]{\mathfrak{S}({#1})}
\newcommand{\relim}[2]{{#2}({#1})}
\DeclareMathOperator{\kernelOp}{ker}
\newcommand{\kernel}[1]{\kernelOp({#1})}
\DeclareMathOperator{\convOp}{conv}
\newcommand{\conv}[1]{\convOp({#1})}
\DeclareMathOperator{\affOp}{aff}
\newcommand{\aff}[1]{\affOp({#1})}
\newcommand{\reflect}[1]{\varrho^{({#1})}}
\newcommand{\dirtreflect}[1]{\varrho^{\star({#1})}}
\DeclareMathOperator{\hOp}{H}
\newcommand{\hyperPlane}[2]{\hOp^{=}({#1},{#2})}
\newcommand{\halfSpace}[2]{\hOp^{\le}({#1},{#2})}
\DeclareMathOperator{\reflRelOp}{R}
\DeclareMathOperator{\transRelOp}{T}
\DeclareMathOperator{\signRelOp}{S}
\DeclareMathOperator{\domOp}{dom}
\newcommand{\reflRel}[2]{\reflRelOp_{{#1},{#2}}}
\newcommand{\transRel}[2]{\transRelOp_{{#1},{#2}}}
\newcommand{\signRel}[1]{\signRelOp_{{#1}}}
\newcommand{\reflRelHalf}[1]{\reflRelOp_{#1}}
\newcommand{\orth}[1]{{#1}^{\perp}}
\newcommand{\permutahd}[2]{\Pi_{#1}({#2})}
\newcommand{\fundReg}[1]{\Phi_{#1}}
\newcommand{\transp}[2]{\tau_{{#1},{#2}}}
\newcommand{\dirtransp}[2]{\tau^{\star}_{{#1},{#2}}}
\newcommand{\sign}[1]{\sigma_{{#1}}}
\newcommand{\dirtsign}[1]{\sigma^{\star}_{{#1}}}
\DeclareMathOperator{\signtrOp}{sign}
\newcommand{\signtr}[1]{\signtrOp({#1})}
\newcommand{\sort}[1]{{#1}^{(\text{sort})}}
\newcommand{\abs}[1]{{#1}^{(\text{abs})}}
\newcommand{\sortabs}[1]{{#1}^{(\text{sort-abs})}}
\newcommand{\domPnt}[2]{{#1}^{(#2)}}
\DeclareMathOperator{\permOp}{perm}
\DeclareMathOperator{\huffOp}{huff}
\DeclareMathOperator{\polyOp}{P}
\DeclareMathOperator{\VOp}{V}
\DeclareMathOperator{\ctOp}{ct}
\newcommand{\perm}[1]{\polyOp_{\permOp}^{#1}}
\newcommand{\huffPoly}[1]{\polyOp_{\huffOp}^{#1}}
\newcommand{\huffVecs}[1]{\VOp_{\huffOp}^{#1}}
\newcommand{\ctvec}[2]{\ctOp({#1},{#2})}
\newcommand{\ctPoly}[1]{\polyOp_{\ctOp}^{#1}}
\newtheorem{theorem}{Theorem}
\newtheorem{prop}{Proposition}
\newtheorem{rem}{Remark}
\newtheorem{lem}{Lemma}
\newtheorem{obs}{Observation}
\newtheorem{cor}{Corollary}
\begin{document}

\title{Constructing Extended Formulations from Reflection Relations}%
\author{Volker Kaibel}%
\email{kaibel@ovgu.de}%
\author{Kanstantsin Pashkovich}%
\thanks{Supported by the \emph{Max Planck Institute for
Dynamics of Complex Technical Systems}.}
\email{pashkovi@mail.math.uni-magdeburg.de}%

\date{\today}%

 \begin{abstract}
	There are many examples of optimization problems whose associated polyhedra can be described much nicer, and with way less inequalities, by projections of higher dimensional polyhedra than this would be possible in the original space. However, currently not many general tools to construct such extended formulations are available. In this paper, we develop a framework of polyhedral relations that generalizes  inductive constructions of extended formulations via projections, and we particularly elaborate on the special case of reflection relations. The latter ones provide polynomial size extended formulations for several polytopes that can be constructed as convex hulls of the unions of (exponentially) many copies of an input polytope obtained via sequences of reflections at hyperplanes. We demonstrate the use of the framework by deriving small extended formulations for the $G$-permutahedra of all finite reflection groups~ $G$ (generalizing both Goeman's~\cite{Goe09} extended formulation of the permutahedron of size $\bigO{n\log n}$ and Ben-Tal and Nemirovski's~\cite{BN01}  extended formulation with $\bigO{k}$ inequalities for the regular $2^k$-gon) and for  Huffman-polytopes (the convex hulls of the weight-vectors of Huffman codes). 
 \end{abstract}

\maketitle

\section{Introduction}

An \emph{extension} of a polyhedron~$P\subseteq\R^n$ is some polyhedron~$Q\subseteq\R^d$ and a linear projection $\pi:\R^d\rightarrow\R^n$ with $\pi(Q)=P$. A description of~$Q$ by linear inequalities (and equations) is called an \emph{extended formulation} for~$P$. Extended formulations have received quite some interest, as in several cases, one can describe polytopes associated with combinatorial optimization problems much easier by means of extended formulations than by linear descriptions in the original space. In particular, such extensions~$Q$ can have way less facets than the polyhedron~$P$ has. For a nice survey on extended formulations we refer to~\cite{CCZ10}. 

Many fundamental questions on the existence of extended formulations with small numbers of inequalities are open. A particularly prominent one asks whether there are polynomial size extended formulations for the perfect matching polytopes of complete graphs (see~\cite{Yan91,KPT10}). In fact, we lack good techniques to bound the sizes of extended formulations from below, and we also need more tools to construct extended formulations. This paper makes a contribution into the latter direction. 

There are several ways to build extended formulations of polytopes from linear decriptions or from extended formulations of other ones (see, e.g., \cite{MRC90,KL10}). A particular simple way is to construct  them inductively from  extended formulations one has already constructed before. As for an example, let for a vector $p\in\RNonNeg^n$ of \emph{processing times} and for some $\sigma\in\symGr{n}$ (where $\symGr{n}$ is the set of all bijections $\gamma:\ints{n}\rightarrow\ints{n}$ with $\ints{n}=\{1,\dots,n\}$), the \emph{completion time vector} be the vector  $\ctvec{p}{\sigma}\in\R^n$ with
	$\ctvec{p}{\sigma}_j=\sum_{i=1}^{\sigma(j)}p_{\sigma^{-1}(i)}$
for all $j\in\ints{n}$. By some simple arguments (resembling the correctness proof of Smith' rule), one can show that $\ctPoly{p}$ is the image of the polytope $P=\ctPoly{\tilde{p}}\times [0,1]^{n-1}$ for  $\tilde{p}=(p_1,\dots,p_{n-1})\in\R^{n-1}$ under the affine map $f:\R^{2n-2}\rightarrow\R^n$ defined via $f(x)=(x'+p_nx'',\scalProd{\tilde{p}}{\oneVec{}-x''}+p_n)$ with $x=(x',x'')$ and $x',x''\in\R^{n-1}$. 

Applying this inductively, one finds that  $\ctPoly{p}$ is a \emph{zonotope}, i.e.,  an affine projection of a cube of dimension $n(n-1)/2$ (which had already been proved  by Wolsey in the 1980's~\cite{Wol}). 
This may appear surprisingly simple viewing the fact that~$\ctPoly{p}$ has exponentially many facets (see~\cite{Que93}). 
	For the special case 
	 of the \emph{permutahedron}
		$\perm{n}=\ctPoly{\oneVec{n}}=\convOp\setDef{(\gamma(1),\dots,\gamma(n))\in\R^n}{\gamma\in\symGr{n}}$,
	Goemans~\cite{Goe09} found an even smaller extended formulation of size $\bigO{n\log n}$, which we will come back to later.

Let us look again at one  step in the inductive  construction described above. 
With the polyhedron 
\begin{equation}\label{eq:intro:R}
	R=\setDef{(x,y)\in\R^{2n-2}\times\R^{n}}{y=f(x)}\,,
\end{equation}
the extension derived in such a step reads
\begin{equation}\label{eq:intro:PpviaR}
	\ctPoly{p}=\setDef{y\in\R^n}{(x,y)\in R\text{ for some }x\in P}\,.
\end{equation}
Thus, we have derived the extended formulation for~$\ctPoly{p}$ by applying in the sense of~\eqref{eq:intro:PpviaR} the ``polyhedral relation'' defined in~\eqref{eq:intro:R} to a polytope~$P$ of which we had found (inductively) an extended formulation before. The goal of this paper is to generalize this technique of deriving extended formulations by using other ``polyhedral relations'' than graphs of affine maps (which~$R$ as defined in~\eqref{eq:intro:R} is). We will introduce the framework of such general polyhedral relations in Section~\ref{sec:polyRel}, and we are going to elaborate on one particular type of those, called \emph{reflection relations}, in Section~\ref{sec:reflRel}. Reflection relations provide, for  affine halfspaces~$H^{\le}\subseteq\R^n$ and  polyhedra $P\subseteq\R^n$, small extended formulations of the convex hull of the union of~$P\cap H^{\le}$ and the image of $P\cap H^{\le}$ under the orthogonal reflection at the boundary hyperplane of~$H^{\le}$. They turn out to be quite useful building blocks in the construction of some extended formulations. 
We  derive some general results on reflection relations (Theorem~\ref{thm:seqReflRel}) that allow to construct rather easily extended formulations for some particular applications (in particular, without  explicitly dealing with the intermediate polyhedra of iterated constructions) .

In a first application, we show how to derive, for each polytope~$P\subseteq\R^n$ that is contained in (the topological closure of) a region of a finite reflection group~$G$ on~ $\R^n$, an extended formulation of the $G$-permutahedron of~$P$, i.e., the convex hull of the union of the polytopes in the orbit of~$P$ under the action of~$G$ (Section~\ref{subsec:reflGroups}). These extended formulations have  $f+\bigO{n\log n}+\bigO{n\log m}$ inequalities, where~$m$ is the largest number such that $I_2(m)$ appears in the decomposition of~$G$ into  irreducible finite reflection groups, and provided that there is an extended formulation for~$P$ with at most~$f$ inequalities. In particular, this generalizes Goemans' extended formulation of the permutahedron~$\perm{n}$ with $\bigO{n\log n}$ inequalities~\cite{Goe09}. In fact, the starting point of our research was to give an alternative proof for the correctness of Goeman's extended formulation that we would be able to generalize to other constructions. 

As a second application, we provide an extended formulation with $\bigO{n\log n}$ inequalities for the convex hull of all weight-vectors of Huffman-codes with~$n$ words  (Section~\ref{subsec:huffman}). This \emph{Huffman-polytope}~$\huffPoly{n}$ is the convex hull of all vectors $(v_1,\dots,v_n)\in\R^n$ for which there is a rooted binary tree with~$n$ leaves labelled by $1, \dots, n$ such that the distance of leaf~$i$ from the root equals~$v_i$ for all $i\in\ints{n}$. This  provides another striking example of the power of extended formulations, as no linear descriptions of~$\huffPoly{n}$ in~$\R^n$ is known so far, and Nguyen, Nguyen, and Maurras~\cite{NNM10} showed that~$\huffPoly{n}$ has $2^{\Omega(n\log n)}$ facets. 

Two well-known results we obtain easily within the framework of reflection relations are  extended formulations with  $2\lceil \log(m)\rceil+2$ inequalities for regular  $m$-gons   (reproving a result of Ben-Tal and Nemirovski~\cite{BN01}, see Section~\ref{subsubsec:I}) and an extended formulation with $4n-1$ inequalities of the  \emph{parity polytope}, i.e., the convex hull of all $v\in\{0,1\}^n$ with an odd number of one-entries (reproving a result of Carr and Konjevod~\cite{CK04}, see Section~\ref{subsubsec:D}).

We conclude  by  briefly discussing (Section~\ref{sec:concl}) directions for future research on the further extension of the tools presented in this paper .

\paragraph{Acknowledgements} We thank Samuel Fiorini and Michel Goemans for valuable hints and discussions.

%
%
%
%
%

\section{Polyhedral Relations}
\label{sec:polyRel}

A \emph{polyhedral relation} of \emph{type $(n,m)$} is a non-empty polyhedron $\varnothing\ne R\subseteq\R^n\times\R^m$. 
The \emph{image} of a subset~$X\subseteq\R^n$ under such a polyhedral relation~$R$ is denoted by 
\begin{equation*}
	\relim{X}{R}=\setDef{y\in\R^m}{(x,y)\in R \text{ for some }x\in X}\,.
\end{equation*}
Clearly, we have the monotonicity relations $\relim{X}{R}\subseteq \relim{\tilde{X}}{R}$ for $X\subseteq\tilde{X}$. Furthermore,  $\relim{X}{R}$ is a linear projection of  $R\cap(X\times\R^m)$ . Thus, images  of polyhedra and convex sets under polyhedral relations are polyhedra and convex sets, respectively.

A \emph{sequential polyhedral relation} of \emph{type $(k_0,\dots,k_r)$}  is a sequence $(R_1,\dots,R_r)$, where~$R_i$ is a polyhedral relation of type $(k_{i-1},k_i)$ for each $i\in\ints{r}$; its \emph{length} is~$r$.
 For such a sequential polyhedral relation, we denote
by $\mathcal{R}=\mathcal{R}_{(R_1,\dots,R_r)}$ the set of all $(z^{(0)},z^{(r)})\in\R^{k_0}\times\R^{k_r}$ for which there is  some $(z^{(1)},\dots,z^{(r-1)})$ with
	$(z^{(i-1)},z^{(i)})\in R_i$ for all $i\in\ints{r}$.
Note that, since~$\mathcal{R}$ is a linear projection of a polyhedron, $\mathcal{R}$ is a polyhedral relation of type $(k_0,k_r)$. We call $\mathcal{R}_{(R_1,\dots,R_r)}$ the polyhedral relation that is \emph{induced} by the sequential polyhedral relation $(R_1,\dots,R_r)$. 

For a polyhedron $P\subseteq\R^{k_0}$, the polyhedron $Q\subseteq\R^{k_0}\times\cdots\times\R^{k_r}$ defined by
\begin{equation}\label{eq:polyRel:extension}
	z^{(0)}\in P
	\quad\text{and}\quad
	(z^{(i-1)},z^{(i)})\in R_i\quad\text{for all }i\in\ints{r}
\end{equation}
satisfies $\pi(Q)=\relim{P}{\mathcal{R}}$, where~$\pi$ is the projection defined via $\pi(z^{(0)},\dots,z^{(r)})=z^{(r)}$.
Thus, \eqref{eq:polyRel:extension} provides an extended formulation of the polyhedron~$\relim{P}{\mathcal{R}}$ with $k_0+\cdots+k_r$ variables and $f_0+\cdots+f_r$ constraints, provided we have linear descriptions of the polyhedra $P$, $R_1$, \dots, $R_r$ with $f_0$, $f_1$, \dots, $f_r$ constraints, respectively. Of course, one can reduce the number of variables in this extended formulation to~$\dim(Q)$. In order to obtain useful upper bounds on this number by means of the polyhedral relations $R_1$, \dots, $R_r$, let us denote, for any polyhedral relation $R\subseteq\R^n\times\R^m$, by $\delta_1(R)$ and $\delta_2(R)$ the dimension of the non-empty fibers of the orthogonal projection of $\aff{R}$ to the first and second factor of $\R^n\times\R^m$, respectively. If $\aff{R}=\setDef{(x,y)\in\R^n\times\R^m}{Ax+By=c}$, then $\delta_1(R)=\dim(\kernel{B})$ and $\delta_2(R)=\dim(\kernel{A})$. With these parameters, we can estimate
\begin{equation*}
	\dim(Q)\le\min\{k_{0}+\sum_{i=1}^r\delta_1(R_i),k_r+\sum_{i=1}^r\delta_2(R_i)\}\,.
 \end{equation*}

\begin{rem}\label{rem:polyRelEF}
	Let $(R_1,\dots,R_{r})$ be a sequential polyhedral relation of type $(k_0,\dots,k_r)$ with induced polyhedral relation~$\mathcal{R}$, let $\pi:\R^{k_0}\times\cdots\times\R^{k_{r}}\rightarrow\R^{k_r}$ be the projection defined via $\pi(z^{(0)},\dots,z^{(r)})=z^{(r)}$, and let~$f_i$ be the number of facets of~$R_i$ for each $i\in\ints{r}$.
		If the polyhedron~$P\subseteq\R^{k_0}$ has an extended formulation with~$k'$ variables and~$f'$ inequalities, then we can construct an  extended formulation for $\relim{P}{\mathcal{R}}$ with $\min\{k'+\sum_{i=1}^r\delta_1(R_i),k_r+\sum_{i=1}^r\delta_2(R_i)\}$ variables and $f'+f_1+\cdots+f_r$ constraints.
\end{rem}

A particularly simple class of polyhedral relations is defined by polyhedra $R\subseteq\R^n\times\R^m$ with $R=\setDef{(x,y)\in\R^n\times\R^m}{y=f(x)}$ for some affine map $f:\R^n\rightarrow\R^m$.  For these polyhedral relations, a  (linear description of a) polyhedron~$P\subseteq\R^n$ is just an extended formulation of the polyhedron $\relim{P}{R}$ via projection~$f$.

The \emph{domain} of a polyhedral relation $R\subseteq\R^n\times\R^m$ is the polyhedron 
\begin{equation*}
	\domOp(R)=\setDef{x\in\R^n}{(x,y)\in R\text{ for some }y\in\R^m}\,. 
\end{equation*}
We clearly have
	$\relim{X}{R}=\bigcup_{x\in X\cap\domOp(R)}\relim{x}{R}$
for all $X\subseteq\R^n$.
Note that, for a polytope~$P=\conv{V}$ with a finite set $V\subseteq\R^n$ and a polyhedral relation~$R\subseteq\R^n\times\R^m$, in general  the inclusion 
\begin{equation}\label{eq:inclRofVerts}
	\convOp\bigcup_{v\in V}\relim{v}{R}\subseteq\relim{P}{R}
\end{equation}
 holds without equality, even in case of 
$P\subseteq\domOp(R)$; as for an example you may consider $P=\convOp\{0,2\}\subseteq\R^1$ and $R=\convOp\{(0,0),(1,1),(2,0)\}$ with $\relim{P}{R}=[0,1]$ and $\relim{0}{R}=\relim{2}{R}=\{0\}$. Fortunately, one can guarantee equality in~\eqref{eq:inclRofVerts} (which makes it much easier to analyze~$\relim{P}{R}$) for an important subclass of polyhedral relations. 

We call a relation~$R\subseteq\R^n\times\R^m$ 
 \emph{affinely generated} by the  family $(\varrho^{(f)})_{f\in F}$, if~$F$ is finite and every 
 $\varrho^{(f)}:\R^n\rightarrow\R^m$ is an affine map such that
		$\relim{x}{R}=\convOp\bigcup_{f\in F}{\varrho^{(f)}(x)}$
holds for all $x\in\domOp(R)$.
	The maps $\varrho^{(f)}$ ($f\in F$) are called \emph{affine generators} of~$R$ in this case.
	 For such a polyhedral relation~$R$ and a polytope~$P\subseteq\R^n$ with $P\cap\domOp(R)=\conv{V}$ for some $V\subseteq\R^n$, we find
\begin{multline*}
	           \relim{P}{R}
	=          \bigcup_{x\in P\cap\domOp(R)}\relim{x}{R}
	=          \bigcup_{x\in P\cap\domOp(R)}\convOp\bigcup_{f\in F}{\varrho^{(f)}(x)}\\
	\subseteq  \convOp\bigcup_{x\in P\cap\domOp(R)}\bigcup_{f\in F}{\varrho^{(f)}(x)}
	=          \convOp\bigcup_{v\in V}\bigcup_{f\in F}{\varrho^{(f)}(v)}
	\subseteq  \convOp\bigcup_{v\in V}\relim{v}{R}\,,
\end{multline*}
where, due to~\eqref{eq:inclRofVerts},  all inclusions are equations. In particular, we have established the following result.

\begin{prop}\label{prop:polyRel}
	For every polyhedral relation $R\subseteq\R^n\times\R^m$ that is affinely generated by a finite family $(\varrho^{(f)})_{f\in F}$, and for every polytope $P\subseteq\R^n$, we have
	\begin{equation}\label{eq:relimconvunionaffim}
		\relim{P}{R}=\convOp\bigcup_{f\in F}\varrho^{(f)}(P\cap\domOp(R))\,.
	\end{equation}
\end{prop}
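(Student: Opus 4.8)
The plan is to turn the displayed chain of (in)equalities that immediately precedes the statement into a clean argument, making explicit the two facts that force the outer inclusions to become equalities. First I would record that $P\cap\domOp(R)$ is a polytope, being the intersection of the polytope~$P$ with the polyhedron $\domOp(R)$, so that $P\cap\domOp(R)=\conv{V}$ for some finite $V\subseteq\R^n$, with $V\subseteq\domOp(R)$. Since the formula $\relim{X}{R}=\bigcup_{x\in X\cap\domOp(R)}\relim{x}{R}$ (noted earlier) applied to $X=P$ and to $X=P\cap\domOp(R)$ gives the same right-hand side, we get $\relim{P}{R}=\relim{P\cap\domOp(R)}{R}=\bigcup_{x\in\conv{V}}\relim{x}{R}$, and the affine-generation hypothesis — which applies because every $x$ in this index set lies in $\domOp(R)$ — rewrites this as $\relim{P}{R}=\bigcup_{x\in\conv{V}}\convOp\bigcup_{f\in F}\varrho^{(f)}(x)$.

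Next I would collapse this into a single convex hull and use that each $\varrho^{(f)}$ is affine, hence commutes with convex hulls:
\begin{align*}
	\bigcup_{x\in\conv{V}}\convOp\bigcup_{f\in F}\varrho^{(f)}(x)
	&\subseteq\convOp\bigcup_{f\in F}\bigcup_{v\in V}\varrho^{(f)}(v)\\
	&=\convOp\bigcup_{f\in F}\varrho^{(f)}(\conv{V})
	=\convOp\bigcup_{f\in F}\varrho^{(f)}\bigl(P\cap\domOp(R)\bigr)\,.
\end{align*}
For the reverse inclusion I would invoke the two structural facts stated before the proposition: for $v\in V\subseteq\domOp(R)$ one has $\varrho^{(f)}(v)\in\relim{v}{R}$, and, by~\eqref{eq:inclRofVerts} applied to the polytope $\conv{V}=P\cap\domOp(R)$, $\convOp\bigcup_{v\in V}\relim{v}{R}\subseteq\relim{P\cap\domOp(R)}{R}=\relim{P}{R}$. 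Chaining these yields $\convOp\bigcup_{f\in F}\varrho^{(f)}(P\cap\domOp(R))\subseteq\relim{P}{R}$, so every inclusion in the cycle is an equality and~\eqref{eq:relimconvunionaffim} follows.

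The argument is essentially bookkeeping, and I do not expect a genuine obstacle: both ingredients feeding~\eqref{eq:inclRofVerts} — that images of convex sets under polyhedral relations are convex, and the monotonicity $\relim{v}{R}\subseteq\relim{P}{R}$ for $v\in P$ — were already established in the text. The one point that needs care is to run the entire computation on $P\cap\domOp(R)$ rather than on~$P$: the identity $\relim{x}{R}=\convOp\bigcup_{f\in F}\varrho^{(f)}(x)$ is assumed only on the domain (off the domain the left side is empty while the right side need not be), and \eqref{eq:inclRofVerts} needs the polytope presented by a finite generating set — so the intersection with $\domOp(R)$ must be carried along throughout.
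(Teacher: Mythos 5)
Your proof is correct and takes essentially the same route as the paper: the identical chain of inclusions starting from $\relim{P}{R}=\bigcup_{x\in P\cap\domOp(R)}\relim{x}{R}$, rewritten via affine generation, collapsed into a single convex hull using the affineness of the maps $\varrho^{(f)}$, and closed into a cycle of equalities by~\eqref{eq:inclRofVerts}. The only difference is that you make explicit the bookkeeping the paper leaves implicit (finiteness of $V$, and that the whole computation must run on $P\cap\domOp(R)$ rather than on $P$), which is a correct and welcome clarification.
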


As we will often deal with polyhedral relations $\mathcal{R}=\mathcal{R}_{(R_1,\dots,R_r)}$  that are induced by a sequential polyhedral relation $(R_1,\dots,R_r)$, it would be convenient to be able to derive affine generators for~$\mathcal{R}$ from affine generators for $R_1$,\dots,$R_r$. This, however, seems impossible in general, where the difficulties arise from the interplay between images  and domains in a sequence of polyhedral relations. However, one still can derive a very useful analogue of the inclusion ``$\subseteq$'' in~\eqref{eq:relimconvunionaffim}.

\begin{lem}\label{lem:seqAffRel}
 	If $(R_1,\dots,R_{r})$ is a sequential polyhedral relation such that, for each $i\in\ints{r}$, the relation~$R_i$ is affinely generated by the finite family $(\varrho^{(f_i)})_{f_i\in F_i}$, then the inclusion 
\begin{equation*}
	\relim{P}{\mathcal{R}}\subseteq\convOp\bigcup_{f\in F}\varrho^{(f)}(P\cap\domOp(\mathcal{R}))
\end{equation*}
holds for every polyhedron $P\subseteq\R^n$, where $F=F_1\times\cdots\times F_r$ and $\varrho^{(f)}=\varrho^{(f_r)}\circ\cdots\circ\varrho^{(f_1)}$ for each $f=(f_1,\dots,f_r)\in F$.
\end{lem}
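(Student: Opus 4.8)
The plan is to prove the inclusion by induction on the length $r$ of the sequential polyhedral relation, peeling off the last relation $R_r$. For the base case $r=1$ the statement is exactly the inclusion ``$\subseteq$'' in~\eqref{eq:relimconvunionaffim} of Proposition~\ref{prop:polyRel}, applied with $R=R_1$; note that since $P$ is only assumed to be a polyhedron, I should first record that Proposition~\ref{prop:polyRel} (or rather just its ``$\subseteq$'' direction) extends from polytopes to general polyhedra — this costs nothing, because the chain of inclusions displayed just before Proposition~\ref{prop:polyRel} used only that $\relim{x}{R}=\convOp\bigcup_{f\in F}\varrho^{(f)}(x)$ on $\domOp(R)$ together with monotonicity, and never needed $P$ to be bounded. (Alternatively one can simply write $P\cap\domOp(R)$ as a union of its faces, or invoke that a polyhedron is a Minkowski sum of a polytope and a cone and argue coordinatewise; but the cleanest route is to observe the boundedness was never used.)

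For the inductive step, write $\mathcal{R}=\mathcal{R}_{(R_1,\dots,R_r)}$ and $\mathcal{R}'=\mathcal{R}_{(R_1,\dots,R_{r-1})}$, so that $\mathcal{R}=\mathcal{R}_{(\mathcal{R}',R_r)}$ as relations, i.e.\ $\relim{P}{\mathcal{R}}=\relim{(\relim{P}{\mathcal{R}'})}{R_r}$ — this identity is immediate from the definition of the induced relation. Apply the base case to the single relation $R_r$ with input polyhedron $P':=\relim{P}{\mathcal{R}'}$ to get
\begin{equation*}
	\relim{P}{\mathcal{R}}=\relim{P'}{R_r}\subseteq\convOp\bigcup_{f_r\in F_r}\varrho^{(f_r)}\big(P'\cap\domOp(R_r)\big)\subseteq\convOp\bigcup_{f_r\in F_r}\varrho^{(f_r)}(P')\,,
\end{equation*}
and then apply the induction hypothesis to $\mathcal{R}'$ to bound $P'=\relim{P}{\mathcal{R}'}\subseteq\convOp\bigcup_{g\in G}\varrho^{(g)}(P\cap\domOp(\mathcal{R}'))$ with $G=F_1\times\cdots\times F_{r-1}$ and $\varrho^{(g)}=\varrho^{(g_{r-1})}\circ\cdots\circ\varrho^{(g_1)}$. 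Since each $\varrho^{(f_r)}$ is affine, hence commutes with $\convOp$ and with unions, pushing $\varrho^{(f_r)}$ through yields $\varrho^{(f_r)}(P')\subseteq\convOp\bigcup_{g\in G}(\varrho^{(f_r)}\circ\varrho^{(g)})(P\cap\domOp(\mathcal{R}'))$, and taking the convex hull over $f_r\in F_r$ and using $\conv{\conv{\cdot}}=\conv{\cdot}$ gives $\relim{P}{\mathcal{R}}\subseteq\convOp\bigcup_{f\in F}\varrho^{(f)}(P\cap\domOp(\mathcal{R}'))$, which is the desired bound except that $\domOp(\mathcal{R}')$ appears where $\domOp(\mathcal{R})$ should.

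The main obstacle is therefore the domain bookkeeping: I must replace $\domOp(\mathcal{R}')$ by $\domOp(\mathcal{R})$, and in general $\domOp(\mathcal{R})\subseteq\domOp(\mathcal{R}')$ strictly (a point may have a nonempty $\mathcal{R}'$-image that is entirely outside $\domOp(R_r)$, so it lies in $\domOp(\mathcal{R}')\setminus\domOp(\mathcal{R})$), so this is not a triviality but genuinely needs the right-hand side to be a \emph{superset}. The key point is that the right-hand side already is large enough: I claim $\varrho^{(f)}(P\cap\domOp(\mathcal{R}'))\subseteq\convOp\bigcup_{h\in F}\varrho^{(h)}(P\cap\domOp(\mathcal{R}))$ is NOT what I need — rather, I should be more careful in the step above and only enlarge domains when it is harmless. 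Concretely, in the base-case application I should keep the sharper bound $\relim{P'}{R_r}\subseteq\convOp\bigcup_{f_r}\varrho^{(f_r)}(P'\cap\domOp(R_r))$, and note $P'\cap\domOp(R_r)=\relim{P}{\mathcal{R}'}\cap\domOp(R_r)$; a point $z^{(0)}\in P$ contributes to this set iff some $\mathcal{R}'$-chain starting at $z^{(0)}$ lands in $\domOp(R_r)$, which is exactly the condition $z^{(0)}\in\domOp(\mathcal{R})$. So when I apply the induction hypothesis I should apply it not to $\relim{P}{\mathcal{R}'}$ but to $\relim{P\cap\domOp(\mathcal{R})}{\mathcal{R}'}\cap\domOp(R_r)$, or — cleanest — strengthen the inductive statement itself to track, for each $f=(f_1,\dots,f_r)$, only those input points whose chosen chain stays feasible; equivalently, prove the statement with $P$ replaced by $P\cap\domOp(\mathcal{R})$ from the start and observe $\relim{P}{\mathcal{R}}=\relim{P\cap\domOp(\mathcal{R})}{\mathcal{R}}$ trivially. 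With that reduction in place the induction goes through verbatim and the final right-hand side has the correct domain $\domOp(\mathcal{R})$.
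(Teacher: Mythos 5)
Your proof is correct. The paper itself omits the proof of Lemma~\ref{lem:seqAffRel} (declaring it straight-forward), so there is no official argument to compare against; your induction on $r$, peeling off $R_r$ and using that the ``$\subseteq$'' half of the derivation of Proposition~\ref{prop:polyRel} never uses boundedness of $P$, is a valid way to fill it in. The one genuinely non-routine point --- that the naive induction only yields the weaker bound with $\domOp(\mathcal{R}_{(R_1,\dots,R_{r-1})})$ in place of $\domOp(\mathcal{R})$ --- is exactly the right thing to worry about, and your fix (replace $P$ by $P\cap\domOp(\mathcal{R})$ at the outset, after which every intermediate domain intersection becomes vacuous because $\domOp(\mathcal{R})\subseteq\domOp(\mathcal{R}_{(R_1,\dots,R_{i})})$ for all $i$) closes it cleanly.
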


We omit the straight-forward proof of Lemma~\ref{lem:seqAffRel} in this extended abstract.

%
%

\section{Reflection Relations}
\label{sec:reflRel}

For $a\in\R^n\setminus\{\zeroVec{}\}$ and $\beta\in\R$, we denote by 
	$\hyperPlane{a}{\beta}=\setDef{x\in\R^n}{\scalProd{a}{x}=\beta}$
the hyperplane defined by the equation $\scalProd{a}{x}=\beta$ and by
	$\halfSpace{a}{\beta}=\setDef{x\in\R^n}{\scalProd{a}{x}\le\beta}$
the halfspace defined by the inequality $\scalProd{a}{x}\le\beta$ (with $\scalProd{v}{w}=\sum_{i=1}^nv_iw_i$ for all $v,w\in\R^n$).
The reflection at $H=\hyperPlane{a}{\beta}$ is 
$\reflect{H}:\R^n\rightarrow\R^n$ where $\reflect{H}(x)$ is the point with $\reflect{H}(x)-x\in\orth{H}$ lying in the one-dimensional linear subspace  $\orth{H}=\setDef{\lambda a}{\lambda\in\R}$ that is orthogonal to~$H$ and $\scalProd{a}{\reflect{H}(x)}=2\beta-\scalProd{a}{x}$. 
The \emph{reflection relation} defined by $(a,\beta)$  is
\begin{equation*}
	\reflRel{a}{\beta}=\setDef{(x,y)\in\R^n\times\R^n}{y-x\in\orth{(\hyperPlane{a}{\beta})}, \scalProd{a}{x}\le\scalProd{a}{y}\le 2\beta-\scalProd{a}{x}}
\end{equation*}
(the definition is invariant against scaling $(a,\beta)$ by positive scalars).
For the halfspace $H^{\le}=\halfSpace{a}{\beta}$, we also denote $\reflRelHalf{H^{\le}}=\reflRel{a}{\beta}$.
 The domain of the reflection relation is
	$\domOp(\reflRel{a}{\beta})=H^{\le}$,
as $(x,y)\in\reflRel{a}{\beta}$ implies $\scalProd{a}{x}\le 2\beta-\scalProd{a}{x}$, thus $\scalProd{a}{x}\le \beta$, and furthermore, for each $x\in\halfSpace{a}{\beta}$, we obviously have $(x,x)\in\reflRel{a}{\beta}$.
Note that, although $(a,\beta)$ and $(-a,-\beta)$ define the same reflection, the reflection relations $\reflRel{a}{\beta}$ and $\reflRel{-a}{-\beta}$ have different domains.

From the constraint $y-x\in\orth{(\hyperPlane{a}{\beta})}$ it follows that $\delta_1(\reflRel{a}{\beta})=1$ holds. 
Thus,  we can deduce the following from Remark~\ref{rem:polyRelEF}.
\begin{rem}\label{rem:sizeSeqRefl}
	If $\mathcal{R}$ is induced by a sequential polyhedral relation of type $(n,\dots,n)$ and length~$r$  consisting of reflection relations only, then, for every polyhedron $P\subseteq\R^n$, an extended formulation of $\relim{P}{\mathcal{R}}$ with $n'+r$ variables and~$f'+2r$ inequalities can be constructed, provided one has at hands an extended formulation for~$P$ with~$n'$ variables and~$f'$ inequalities.
\end{rem}

\begin{prop}\label{prop:polyRel:refl}
	For $a\in\R^n\setminus\{\zeroVec{}\}$, $\beta\in\R$ and the hyperplane $H=\hyperPlane{a}{\beta}$,  the reflection relation $\reflRel{a}{\beta}$ is affinely generated by the identity map and the reflection~$\reflect{H}$. 
\end{prop}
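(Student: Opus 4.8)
The plan is to reduce the assertion to a one-dimensional computation along the normal direction~$a$. Fix a point $x$ in the domain $\domOp(\reflRel{a}{\beta})=\halfSpace{a}{\beta}$, so that $\scalProd{a}{x}\le\beta$. Every $y$ with $y-x\in\orth{(\hyperPlane{a}{\beta})}$ can be written uniquely as $y=x+\lambda a$ with $\lambda\in\R$, and then $\scalProd{a}{y}=\scalProd{a}{x}+\lambda\scalProd{a}{a}$, where $\scalProd{a}{a}>0$. Substituting this into the two inequalities $\scalProd{a}{x}\le\scalProd{a}{y}\le2\beta-\scalProd{a}{x}$ that, together with $y-x\in\orth{(\hyperPlane{a}{\beta})}$, define $\reflRel{a}{\beta}$, I would obtain that they are equivalent to $0\le\lambda\le\mu$, where $\mu:=2(\beta-\scalProd{a}{x})/\scalProd{a}{a}$. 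Since $x\in\halfSpace{a}{\beta}$ gives $\mu\ge0$, this shows $\relim{x}{\reflRel{a}{\beta}}=\setDef{x+\lambda a}{0\le\lambda\le\mu}$, a (possibly degenerate) line segment.

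It then remains to recognize the two endpoints of this segment as the images of~$x$ under the two claimed affine generators. For $\lambda=0$ the endpoint is $x=\idOp(x)$. For $\lambda=\mu$ the endpoint is $x+\mu a$; a direct check shows that $(x+\mu a)-x\in\orth{(\hyperPlane{a}{\beta})}$ and $\scalProd{a}{x+\mu a}=2\beta-\scalProd{a}{x}$, so by the characterizing properties of the reflection one has $\reflect{H}(x)=x+\mu a=x+\tfrac{2(\beta-\scalProd{a}{x})}{\scalProd{a}{a}}\,a$. In particular $\reflect{H}$ is an affine map (a linear endomorphism plus the constant vector $\tfrac{2\beta}{\scalProd{a}{a}}a$), and $\idOp$ is trivially affine. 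Consequently $\relim{x}{\reflRel{a}{\beta}}=\setDef{x+\lambda a}{0\le\lambda\le\mu}=\convOp\{x,\reflect{H}(x)\}=\convOp\{\idOp(x),\reflect{H}(x)\}$ for every $x\in\domOp(\reflRel{a}{\beta})$, which is precisely the statement that $\reflRel{a}{\beta}$ is affinely generated by the family consisting of~$\idOp$ and~$\reflect{H}$.

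There is no real obstacle beyond this bookkeeping; the only step that deserves a word of care is the sign condition $\mu\ge0$, which is exactly why the domain of $\reflRel{a}{\beta}$ is the halfspace $\halfSpace{a}{\beta}$: for $x\notin\halfSpace{a}{\beta}$ one would get $\mu<0$, and the constraints would force $\lambda\le\mu<0\le\lambda$, so that $\relim{x}{\reflRel{a}{\beta}}=\varnothing$, in agreement with the computation of $\domOp(\reflRel{a}{\beta})$ carried out above. Note also that the whole argument is invariant under scaling $(a,\beta)$ by positive reals, consistent with the corresponding remark accompanying the definition of $\reflRel{a}{\beta}$.
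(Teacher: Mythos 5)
Your proof is correct and follows essentially the same route as the paper's: both reduce the claim to the one-dimensional observation that the constraints $y-x\in\orth{H}$ and $\scalProd{a}{x}\le\scalProd{a}{y}\le 2\beta-\scalProd{a}{x}$ pin $y$ onto the segment between $x$ and $\reflect{H}(x)$. The paper argues this slightly more abstractly (collinearity plus the inner-product sandwich, with the reverse inclusion from convexity of the fiber), while you make the parametrization $y=x+\lambda a$ explicit, but the content is identical.
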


\begin{proof}
	We need to show $\relim{x}{\reflRel{a}{\beta}}=\convOp\{x,\reflect{H}(x)\}$ for every $x\in \domOp(\reflRel{a}{\beta})=\halfSpace{a}{\beta}$. Since, for each such~$x$, we have $(x,x)\in\relim{x}{\reflRel{a}{\beta}}$ and $(x,\reflect{H}(x))\in\relim{x}{\reflRel{a}{\beta}}$, and due to the convexity of $\relim{x}{\reflRel{a}{\beta}}$, it suffices to establish the  inclusion ``$\subseteq$''. 
Thus, let $y\in\relim{x}{\reflRel{a}{\beta}}$ be an arbitrary point in $\relim{x}{\reflRel{a}{\beta}}$. 
Due to $\reflect{H}(x)-x\in\orth{H}$ and $y-x\in\orth{H}$, both $x$ and $\reflect{H}(x)$ are contained in the line $y+\orth{H}$. From $2\beta-\scalProd{a}{x}=\scalProd{a}{\reflect{H}(x)}$ and 
 $\scalProd{a}{x}\le\scalProd{a}{y}\le 2\beta-\scalProd{a}{x}$ we hence conclude that~$y$ is a convex combination of~$x$ and $\reflect{H}(x)$.
\end{proof}

From Proposition~\ref{prop:polyRel} and Proposition~\ref{prop:polyRel:refl}, one obtains the following result.

\begin{cor}\label{cor:polyRel:refl}
	If $P\subseteq\R^n$ is a polytope, then we have, for $a\in\R^n\setminus\{\zeroVec{}\}$ and $\beta\in\R$ defining the hyperplane $H=\hyperPlane{a}{\beta}$ and the halfspace $H^{\le}=\halfSpace{a}{\beta}$,
	\begin{equation*}
		\relim{P}{\reflRel{a}{\beta}}=\convOp\big((P\cap H^{\le})\cup\reflect{H}(P\cap H^{\le})\big)\,.
	\end{equation*}
\end{cor}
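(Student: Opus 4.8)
The plan is to obtain Corollary~\ref{cor:polyRel:refl} as an immediate consequence of the two preceding results, so the ``proof'' is really just a matter of assembling the pieces correctly. First I would invoke Proposition~\ref{prop:polyRel:refl}, which tells us that the reflection relation $\reflRel{a}{\beta}$ is affinely generated by the finite family consisting of the two affine maps $\varrho^{(1)}=\idOp$ and $\varrho^{(2)}=\reflect{H}$, i.e., with index set $F=\{1,2\}$. Since $P\subseteq\R^n$ is assumed to be a polytope, it is the convex hull of a finite set of points, and hence so is $P\cap\domOp(\reflRel{a}{\beta})$, since the domain is a polyhedron (in fact the halfspace $H^{\le}$, as computed just before Proposition~\ref{prop:polyRel:refl}) and the intersection of a polytope with a polyhedron is again a polytope. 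Thus the hypotheses of Proposition~\ref{prop:polyRel} are met.

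Second, I would apply Proposition~\ref{prop:polyRel} directly: equation~\eqref{eq:relimconvunionaffim} gives
\begin{equation*}
	\relim{P}{\reflRel{a}{\beta}}=\convOp\bigcup_{f\in F}\varrho^{(f)}\big(P\cap\domOp(\reflRel{a}{\beta})\big)\,.
\end{equation*}
It then remains only to substitute the explicit data: $\domOp(\reflRel{a}{\beta})=\halfSpace{a}{\beta}=H^{\le}$ by the computation of the domain preceding Proposition~\ref{prop:polyRel:refl}, and $F=\{1,2\}$ with $\varrho^{(1)}(P\cap H^{\le})=P\cap H^{\le}$ and $\varrho^{(2)}(P\cap H^{\le})=\reflect{H}(P\cap H^{\le})$. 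This yields
\begin{equation*}
	\relim{P}{\reflRel{a}{\beta}}=\convOp\big((P\cap H^{\le})\cup\reflect{H}(P\cap H^{\le})\big)\,,
\end{equation*}
which is exactly the claimed identity.

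I do not anticipate any real obstacle here, as the corollary is a direct specialization; the only mild point worth a sentence is the observation that $P\cap H^{\le}$ is a polytope (so that Proposition~\ref{prop:polyRel} applies verbatim), and that the union over the two-element index set $F$ unfolds precisely into the union of $P\cap H^{\le}$ with its reflected copy. If one wanted to be fully self-contained one could alternatively note that Proposition~\ref{prop:polyRel} is itself derived from the chain of inclusions collapsing to equalities via~\eqref{eq:inclRofVerts}, but for the corollary it suffices to cite Propositions~\ref{prop:polyRel} and~\ref{prop:polyRel:refl} as black boxes.
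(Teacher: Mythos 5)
Your proposal is correct and matches the paper's intent exactly: the paper derives the corollary precisely by combining Proposition~\ref{prop:polyRel} with Proposition~\ref{prop:polyRel:refl}, using $F=\{\idOp,\reflect{H}\}$ and $\domOp(\reflRel{a}{\beta})=H^{\le}$. Your additional remark that $P\cap H^{\le}$ is a polytope is a harmless (and accurate) elaboration of a step the paper leaves implicit.
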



While Corollary~\ref{cor:polyRel:refl} describes images under single reflection relations, for  analyses of the images under sequences of reflection relations we define, for each $a\in\R^n\setminus\{\zeroVec{}\}$, $\beta\in\R$, $H^{\le}=\halfSpace{a}{\beta}$, and $H=\hyperPlane{a}{\beta}$, the 
 map $\dirtreflect{H^{\le}}:\R^n\rightarrow\R^n$  via
\begin{equation*}
	\dirtreflect{H^{\le}}(y)=
	\begin{cases}
		y & \text{if } y \in H^{\le}\\
		\reflect{H}(y)              & \text{otherwise}
	\end{cases}
\end{equation*}
for all $y\in\R^n$, which assigns a canonical preimage to every~$y\in\R^n$.
If~$\mathcal{R}$ denotes the polyhedraöl relation induced by the sequential polyhedral relation $(\reflRelHalf{H^{\le}_1},\ldots,\reflRelHalf{H^{\le}_r})$, for all $y\in\R^n$, we have 
\begin{equation}\label{eq:Hstar}
	y\in\relim{\dirtreflect{H^{\le}_1}\circ\cdots\circ\dirtreflect{H^{\le}_r}(y)}{\mathcal{R}}\,.
\end{equation}


\begin{theorem}\label{thm:seqReflRel}
	 Let the  sequential polyhedral relation
	 $(\reflRelHalf{H^{\le}_1},\ldots,\reflRelHalf{H^{\le}_r})$ with halfspaces  $H^{\le}_1, \dots,H^{\le}_r\subseteq\R^n$ and boundary hyperplanes $H_1,\dots,H_r$ induce the polyhedral relation $\mathcal{R}$. 
For  polytopes $P, Q \subseteq\R^n$, with  $Q=\conv{W}$ for some $W\subseteq\R^n$, we have  $Q=\relim{P}{\mathcal{R}}$, whenever the following two conditions are satisfied:
	\begin{enumerate}
		\item \label{cond::seqReflRel1} We have $P\subseteq Q$ and $\reflect{H_i}(Q)\subseteq Q$ for all $i\in\ints{r}$.
		\item \label{cond::seqReflRel2} We have  $\dirtreflect{H^{\le}_1}\circ\cdots\circ\dirtreflect{H^{\le}_r}(w)\in P$ for all $w\in W$.
	\end{enumerate}
\end{theorem}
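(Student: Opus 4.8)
I would prove the statement by establishing the two inclusions $\relim{P}{\mathcal{R}}\subseteq Q$ and $Q\subseteq\relim{P}{\mathcal{R}}$ separately. For the former I plan to use Lemma~\ref{lem:seqAffRel} together with Proposition~\ref{prop:polyRel:refl}; for the latter the canonical-preimage property~\eqref{eq:Hstar} together with the monotonicity of images under polyhedral relations.

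For $\relim{P}{\mathcal{R}}\subseteq Q$, I would recall from Proposition~\ref{prop:polyRel:refl} that each reflection relation $\reflRelHalf{H^{\le}_i}$ is affinely generated by the two maps $\idOp$ and $\reflect{H_i}$. By Lemma~\ref{lem:seqAffRel}, $\relim{P}{\mathcal{R}}$ is therefore contained in the convex hull of the sets $\varrho(P\cap\domOp(\mathcal{R}))$, where $\varrho$ ranges over all $r$-fold compositions $\varrho_r\circ\cdots\circ\varrho_1$ with $\varrho_i\in\{\idOp,\reflect{H_i}\}$ for each $i\in\ints{r}$. Each such composition $\varrho$ maps $Q$ into $Q$: the factor $\idOp$ does so trivially, each factor $\reflect{H_i}$ does so by condition~\eqref{cond::seqReflRel1}, and hence so does the whole composition. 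Since $P\cap\domOp(\mathcal{R})\subseteq P\subseteq Q$ (again by condition~\eqref{cond::seqReflRel1}), we obtain $\varrho(P\cap\domOp(\mathcal{R}))\subseteq Q$ for every such $\varrho$, and, $Q$ being convex, the convex hull of the union of these sets is contained in $Q$ as well. It is worth noting that this half uses neither condition~\eqref{cond::seqReflRel2} nor the explicit form of the domains of the reflection relations — the one-sided inclusion of Lemma~\ref{lem:seqAffRel} absorbs all the difficulty stemming from the interplay of images and domains along the sequence.

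For $Q\subseteq\relim{P}{\mathcal{R}}$, I would use that $\relim{P}{\mathcal{R}}$ is a polyhedron, hence convex, and that $Q=\conv{W}$, so that it suffices to verify $W\subseteq\relim{P}{\mathcal{R}}$. Fix $w\in W$ and put $p=\dirtreflect{H^{\le}_1}\circ\cdots\circ\dirtreflect{H^{\le}_r}(w)$. By~\eqref{eq:Hstar} we have $w\in\relim{p}{\mathcal{R}}$, while condition~\eqref{cond::seqReflRel2} gives $p\in P$, so that $\relim{p}{\mathcal{R}}\subseteq\relim{P}{\mathcal{R}}$ by monotonicity. Hence $w\in\relim{P}{\mathcal{R}}$, and passing to convex hulls yields $Q\subseteq\relim{P}{\mathcal{R}}$.

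I do not expect a real obstacle: the substance of the theorem is essentially packaged in Lemma~\ref{lem:seqAffRel} and in~\eqref{eq:Hstar}, and the statement is mostly the clean interface for applying them. The only points requiring a little care are bookkeeping: matching the composition order $\varrho_r\circ\cdots\circ\varrho_1$ coming from Lemma~\ref{lem:seqAffRel} with the reverse order in which the canonical preimages $\dirtreflect{H^{\le}_i}$ are composed in~\eqref{eq:Hstar}, and observing that intersecting with $\domOp(\mathcal{R})$ can never lead out of $Q$.
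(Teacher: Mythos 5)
Your proposal is correct and follows essentially the same route as the paper's proof: Lemma~\ref{lem:seqAffRel} (via the affine generators $\idOp$ and $\reflect{H_i}$ from Proposition~\ref{prop:polyRel:refl}) together with condition~\eqref{cond::seqReflRel1} for $\relim{P}{\mathcal{R}}\subseteq Q$, and~\eqref{eq:Hstar} with condition~\eqref{cond::seqReflRel2} and convexity for the reverse inclusion. You simply spell out in more detail the steps the paper compresses into three sentences.
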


\begin{proof}
	From the first condition it follows that the image of $P$ under every combination of maps $\reflect{H_i}$ lies in   $Q$. Thus, from Lemma~\ref{lem:seqAffRel} we have the inclusion $\relim{P}{\mathcal{R}}\subseteq Q$. By the second condition  and~\eqref{eq:Hstar}, we have $W\subseteq\relim{P}{\mathcal{R}}$, and hence $Q=\conv{W}\subseteq\relim{P}{\mathcal{R}}$ due to the convexity of $\relim{P}{\mathcal{R}}$.
\end{proof}

In order to provide  simple examples of  extended formulations obtained from reflection relations, let us define the \emph{signing} of a polyhedron $P\subseteq\R^n$ to be
\begin{equation*}
	\signtr{P}=\convOp\bigcup_{\epsilon\in\{-,+\}^n}\epsilon.P\,,
\end{equation*}
where $\epsilon.x$ is the vector obtained from $x\in\R^n$ by changing the signs of all coordinates~$i$ with $\epsilon_i$ being minus. For $x\in\R^n$, we denote by $\abs{x}\in\R^n$ the vector that is obtained from~$x$ by changing every component to its absolute value.

For the construction below we use the reflection relations $\reflRel{-\unitVec{k}}{0}$, denoted by $\signRel{k}$, for all $k\in\ints{n}$. The corresponding reflection  $\sign{k}:\R^n\rightarrow\R^n$ is just the sign change of the $k$-th  coordinate, given by
\begin{equation*}
	\sign{k}(x)_{i}=
	\begin{cases}
		-x_i & \text{if }i=k\\
		x_i & \text{otherwise}
	\end{cases}
\end{equation*}
for all $x\in\R^n$.
The map which defines the canonical preimage with respect to the relation $\signRel{k}$ is given by
\begin{equation*}
	\dirtsign{k}(y)_i=
	\begin{cases}
		|y_i| & \text{if }i=k\\
		y_i & \text{otherwise}
	\end{cases}
\end{equation*}
for all $y\in\R^n$.

\begin{prop}\label{prop:changingSigns}
	If $\mathcal{R}$ is the polyhedral relation that is induced by the sequence $(\signRel{1}, \ldots, \signRel{n})$ and $P\subseteq\R^n$ is a polytope with $\abs{v}\in P$ for each vertex~$v$ of~$P$, then we have
	\begin{equation*}
		\relim{P}{\mathcal{R}}=\signtr{P}\,.
	\end{equation*}
\end{prop}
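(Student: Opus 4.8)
The plan is to apply Theorem~\ref{thm:seqReflRel} with $Q=\signtr{P}$ and the given sequential reflection relation $(\signRel{1},\ldots,\signRel{n})$, and then verify the two hypotheses. First I would record what the ingredients of the theorem become in this setting: the hyperplanes are $H_k=\hyperPlane{-\unitVec{k}}{0}=\setDef{x\in\R^n}{x_k=0}$, the reflections $\reflect{H_k}$ are exactly the sign changes $\sign{k}$, and the canonical-preimage maps $\dirtreflect{H^{\le}_k}$ are the maps $\dirtsign{k}$ described just above the proposition. Taking $W$ to be the set of vertices of $\signtr{P}=\convOp\bigcup_{\epsilon}\epsilon.P$; note every vertex of $\signtr{P}$ is of the form $\epsilon.v$ for some sign vector $\epsilon$ and some vertex $v$ of $P$, since $\signtr{P}$ is the convex hull of the finite set $\bigcup_\epsilon\{\epsilon.v:v\text{ a vertex of }P\}$.

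Next I would check condition~\eqref{cond::seqReflRel1}. The inclusion $P\subseteq\signtr{P}$ is immediate from the definition (take $\epsilon=(+,\dots,+)$). For $\sign{k}(\signtr{P})\subseteq\signtr{P}$: $\sign{k}$ is linear and maps $\epsilon.P$ onto $\epsilon'.P$ where $\epsilon'$ differs from $\epsilon$ only in coordinate $k$, so $\sign{k}$ merely permutes the sets $\epsilon.P$ in the union; hence it maps $\signtr{P}=\convOp\bigcup_\epsilon\epsilon.P$ onto itself. That dispatches the first condition with essentially no work.

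The substantive step is condition~\eqref{cond::seqReflRel2}: for every vertex $w=\epsilon.v$ of $\signtr{P}$ (with $v$ a vertex of $P$), one must show $\dirtsign{1}\circ\cdots\circ\dirtsign{n}(w)\in P$. Here I would observe that $\dirtsign{k}$ replaces coordinate $k$ by its absolute value and leaves the others unchanged, and that these maps commute (each touches a distinct coordinate), so the composition $\dirtsign{1}\circ\cdots\circ\dirtsign{n}$ is precisely the map $x\mapsto\abs{x}$ taking every coordinate to its absolute value. Applied to $w=\epsilon.v$ this yields $\abs{(\epsilon.v)}=\abs{v}$, since sign changes do not affect absolute values. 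By the hypothesis of the proposition, $\abs{v}\in P$ for every vertex $v$ of $P$, so the condition holds. I expect this bookkeeping about the $\dirtsign{k}$ composing to the absolute-value map — and the identification of the vertices of $\signtr{P}$ — to be the only place requiring care; everything else is a direct substitution into Theorem~\ref{thm:seqReflRel}. With both conditions verified, the theorem gives $\signtr{P}=\relim{P}{\mathcal{R}}$, which is the claim.
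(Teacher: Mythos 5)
Your proposal is correct and follows the same route as the paper: apply Theorem~\ref{thm:seqReflRel} with $Q=\signtr{P}$ and $W$ the signed copies of the vertices of $P$, and observe that $\dirtsign{1}\circ\cdots\circ\dirtsign{n}$ is the componentwise absolute-value map, so condition~(2) reduces to the hypothesis $\abs{v}\in P$. The only (harmless) difference is that you take $W$ to be the vertex set of $\signtr{P}$ rather than all of $\setDef{\epsilon.v}{\epsilon\in\{-,+\}^n,\ v\text{ vertex of }P}$, and you spell out the verification of condition~(1), which the paper leaves implicit.
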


\begin{proof}
With $Q=\signtr{P}$, the first condition  of Theorem~\ref{thm:seqReflRel} is  satisfied. Furthermore, we 
have $Q=\conv{W}$ with $W=\setDef{\epsilon.v}{\epsilon\in\{-,+\}^n,v\text{ vertex of }P}$. As, for every $w\in W$ with $w=\epsilon.v$ for some vertex $v$ of~$P$ and  $\epsilon\in\{-,+\}^n$, we  have 
$\dirtsign{1}\circ\dots\circ\dirtsign{n}(w)=\abs{w}=\abs{v}\in P$, 
also the second condition of Theorem~\ref{thm:seqReflRel} is satisfied. Hence the claim follows.
\end{proof}

Proposition~\ref{prop:changingSigns} and Remark~\ref{rem:sizeSeqRefl} imply the following.
\begin{theorem}
	For each polytope $P\subseteq\R^n$ with $\abs{v}\in P$ for each vertex~$v$ of~$P$ that admits an extended formulation with~$n'$ variables and~$f'$ inequalities, there is an extended formulation of $\signtr{P}$ with $n'+n$ variables and $f'+2n$ inequalities.
\end{theorem}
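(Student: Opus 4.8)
The plan is to combine the two results that immediately precede this theorem. By Proposition~\ref{prop:changingSigns}, if $\mathcal{R}$ denotes the polyhedral relation induced by the sequence $(\signRel{1},\ldots,\signRel{n})$, then $\signtr{P}=\relim{P}{\mathcal{R}}$, since by hypothesis $P$ is a polytope with $\abs{v}\in P$ for each vertex $v$ of $P$. So it suffices to read off the size of the extended formulation for $\relim{P}{\mathcal{R}}$ that the framework produces.

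First I would observe that each $\signRel{k}=\reflRel{-\unitVec{k}}{0}$ is a reflection relation in $\R^n$, so $(\signRel{1},\ldots,\signRel{n})$ is a sequential polyhedral relation of type $(n,\ldots,n)$ and length $r=n$ consisting of reflection relations only. This is exactly the situation covered by Remark~\ref{rem:sizeSeqRefl}: given an extended formulation for $P$ with $n'$ variables and $f'$ inequalities, one obtains an extended formulation for $\relim{P}{\mathcal{R}}$ with $n'+r = n'+n$ variables and $f'+2r = f'+2n$ inequalities. (Recall that Remark~\ref{rem:sizeSeqRefl} rests on the fact that $\delta_1(\reflRel{a}{\beta})=1$ together with Remark~\ref{rem:polyRelEF}, and that each reflection relation $\reflRel{a}{\beta}$ is described by the two inequalities $\scalProd{a}{x}\le\scalProd{a}{y}$ and $\scalProd{a}{y}\le 2\beta-\scalProd{a}{x}$ — here with $a=-\unitVec{k}$, $\beta=0$ — contributing $2$ inequalities each, while the equations $y-x\in\orth{(\hyperPlane{a}{\beta})}$ do not count toward the facet bound.) Substituting $\relim{P}{\mathcal{R}}=\signtr{P}$ from Proposition~\ref{prop:changingSigns} gives precisely the claimed extended formulation of $\signtr{P}$ with $n'+n$ variables and $f'+2n$ inequalities.

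There is essentially no obstacle here: the theorem is a direct corollary, and the only points requiring a word of care are (i) checking that the hypothesis ``$\abs{v}\in P$ for each vertex $v$ of $P$'' is exactly the hypothesis of Proposition~\ref{prop:changingSigns}, so that the equality $\signtr{P}=\relim{P}{\mathcal{R}}$ is available, and (ii) confirming that the length of the sequence is $n$ (one sign-change relation per coordinate), so that the additive terms are $+n$ and $+2n$ rather than some other multiple of $n$. Both are immediate from the definitions of $\mathcal{R}$ and $\signRel{k}$ given just above the theorem. Hence the proof is the one-line deduction: apply Proposition~\ref{prop:changingSigns} to identify the target polytope, then apply Remark~\ref{rem:sizeSeqRefl} with $r=n$ to count variables and inequalities.
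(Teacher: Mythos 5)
Your proof is correct and is exactly the paper's argument: the paper derives this theorem directly from Proposition~\ref{prop:changingSigns} (to identify $\relim{P}{\mathcal{R}}=\signtr{P}$) together with Remark~\ref{rem:sizeSeqRefl} applied with $r=n$. Nothing is missing.
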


\section{Applications}

\subsection{Reflection Groups}
\label{subsec:reflGroups}

A \emph{finite reflection group} is a group~$G$ of finite cardinality that is generated by a (finite) family  $\reflect{H_i}:\R^n\rightarrow\R^n$ ($i\in I$) of reflections at hyperplanes $\zeroVec{}\in H_i\subseteq\R^n$ 
containing the origin. We refer to~\cite{Hum90,FR07} for all results on reflection groups that we will mention. The set of \emph{reflection hyperplanes} $H\subseteq\R^n$ with $\reflect{H}\in G$ (and thus $\zeroVec{}\in H$) --- called the \emph{Coxeter arrangement} of~$G$ --- cuts~$\R^n$ into open connected components, which are called the \emph{regions} of~$G$. The group~$G$ is in bijection with the set of its regions, and it
 acts transitively on these regions. If one distinguishes arbitrarily the topological closure of one of them as the \emph{fundamental domain}  $\fundReg{G}$ of~ $G$, then, for every point $x\in\R^n$, there is a \emph{unique} point
$\domPnt{x}{\fundReg{G}}\in \fundReg{G}$ that belongs to the orbit of $x$ under the action of the group~$G$ on~$\R^n$. 

A finite reflection group~$G$ is called \emph{irreducible} if the set of reflection hyperplanes cannot be partitioned into two sets $\mathcal{H}_1$ and $\mathcal{H}_2$ such that the normal vectors of all hyperplanes in $\mathcal{H}_1$ are orthogonal to the normal vectors of all hyperplanes from $\mathcal{H}_2$.
According to a central classification result, up to linear transformations, the family of irreducible finite reflection groups consists of the four infinite subfamilies $I_2(m)$ (on $\R^2$), $A_{n-1}$, $B_n$, and $D_n$ (on $\R^n$), as well as  six special groups. 

For a finite reflection group~$G$ on~$\R^n$ and some polytope $P\subseteq\R^n$ 
of~$G$, the \emph{$G$-permutahedron} $\permutahd{G}{P}$ of~$P$ is the convex hull of the union of the orbit of~$P$ under the action of~$G$. In this subsection, we show for $G$ being one of $I_2(m)$, $A_{n-1}$, $B_n$, or $D_n$, how to construct an extended formulation for $\permutahd{G}{P}$ from an extended formulation for~$P$. The numbers of inequalities in the constructed extended formulations will be bounded by $f+\bigO{\log m}$ in case of $G=I_2(m)$ and by $f+\bigO{n\log n}$ in the other cases, provided that we have at hands an extended formulation of~$P$ with~$f$ inequalities. By the decomposition  into irreducible finite reflection groups, one  can extend these constructions to  arbitrary finite reflection groups~$G$ on~$\R^n$, where the resulting extended formulations have $f+\bigO{n\log m}+\bigO{n\log n}$ inequalities, where~$m$ is the largest number such that $I_2(m)$ appears in the decomposition of~$G$ into  irreducible finite reflection groups. Details on this will be in the full version of the paper.

\subsubsection{The reflection group $I_2(m)$}
\label{subsubsec:I}
For $\varphi\in\R$, let us denote $H_{\varphi}=\hyperPlane{(-\sin\varphi,\cos\varphi)}{0}$ and $H^{\le}_{\varphi}=\halfSpace{(-\sin\varphi,\cos\varphi)}{0}$. 
The group $I_2(m)$ is generated by the reflections at $H_0$ and $H_{\pi/m}$. It is the symmetry group of the regular $m$-gon with its center at the origin and one of its vertices at $(1,0)$. 
The group $I_2(m)$ consists of the (finite) set of all reflections
$\reflect{H_{k\pi/m}}$ (for $k\in\Z$) and the (finite) set of all rotations around the origin by angles $2k\pi/m$ (for $k\in\Z$).
We choose $\fundReg{I_2(m)}=\setDef{x\in\R^2}{x_2\ge 0,x\in H^{\le}_{\pi/m}}$  as the fundamental domain. 


\begin{prop}\label{prop:I}
	Let~$\mathcal{R}$ be induced by 
	the sequence $(\reflRelHalf{H^{\le}_{\pi/m}},\reflRelHalf{H^{\le}_{2\pi/m}},\reflRelHalf{H^{\le}_{4\pi/m}},\dots,\reflRelHalf{H^{\le}_{2^r\pi/m}})$ of reflection relations with $r=\lceil \log(m)\rceil$. If $P\subseteq\R^2$ is a polytope with $\domPnt{v}{\fundReg{I_2(m)}}\in P$ for 
	each vertex~$v$ of~$P$, then we have
		$\relim{P}{\mathcal{R}}=\permutahd{I_2(m)}{P}$.
\end{prop}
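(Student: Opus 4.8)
The plan is to apply Theorem~\ref{thm:seqReflRel} with the given $P$, with $Q=\permutahd{I_2(m)}{P}$, and with $W$ the set of all images $g(v)$ of vertices $v$ of $P$ under group elements $g\in I_2(m)$. So the task reduces to verifying the two conditions of that theorem for the specific sequence of halfspaces $H^{\le}_{\pi/m},H^{\le}_{2\pi/m},H^{\le}_{4\pi/m},\dots,H^{\le}_{2^r\pi/m}$.

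\medskip

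\noindent\textbf{Condition (1).} Here $P\subseteq Q$ is immediate from the definition of the $G$-permutahedron. For the reflection-invariance $\reflect{H_i}(Q)\subseteq Q$, I would note that each hyperplane in our sequence is $H_{2^{j}\pi/m}$ for $0\le j\le r$, and each such $H_{2^j\pi/m}$ is a reflection hyperplane of $I_2(m)$ (since $I_2(m)$ contains $\reflect{H_{k\pi/m}}$ for \emph{all} $k\in\Z$, and $2^j\pi/m = (2^j)\cdot\pi/m$). Because $Q$ is by construction the convex hull of a union of $G$-orbits, it is $G$-invariant; in particular $\reflect{H_{2^j\pi/m}}(Q)=Q$. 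So Condition~(1) holds.

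\medskip

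\noindent\textbf{Condition (2).} This is the crux. I must show that the composition $\dirtreflect{H^{\le}_{\pi/m}}\circ\dirtreflect{H^{\le}_{2\pi/m}}\circ\cdots\circ\dirtreflect{H^{\le}_{2^r\pi/m}}$ maps every $w\in W$ into $P$. By hypothesis, $P$ contains $\domPnt{v}{\fundReg{I_2(m)}}$ for every vertex $v$; and every $w\in W$ has the form $g(v)$ for a vertex $v$ and $g\in I_2(m)$, so $w$ lies in the same $G$-orbit as $v$, hence $\domPnt{w}{\fundReg{I_2(m)}}=\domPnt{v}{\fundReg{I_2(m)}}\in P$. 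Thus it suffices to prove that the canonical-preimage composition $\dirtreflect{H^{\le}_{\pi/m}}\circ\cdots\circ\dirtreflect{H^{\le}_{2^r\pi/m}}$ sends an \emph{arbitrary} point $y\in\R^2$ to its fundamental-domain representative $\domPnt{y}{\fundReg{I_2(m)}}$, where $\fundReg{I_2(m)}=\setDef{x\in\R^2}{x_2\ge 0,\ x\in H^{\le}_{\pi/m}}$. The argument I would give is geometric and proceeds from right to left: applying $\dirtreflect{H^{\le}_{2^r\pi/m}}$ reflects $y$ into the halfspace $H^{\le}_{2^r\pi/m}$, which (since $2^r\ge m$, so $2^r\pi/m\ge\pi$) is a halfspace whose bounding line, read modulo $\pi$, covers all of $H_0$; the successive halfspaces $H^{\le}_{2^j\pi/m}$ for $j=r-1,\dots,1$ then perform a binary-search-style angle reduction, each step at worst halving the angular ``excess'' of the argument above the fundamental wedge, while never pushing a point that is already in the target region back out of it (this is exactly what $\dirtreflect{}$ guarantees: it is the identity on points already in the halfspace). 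After processing down to $j=1$ the point lies in $H^{\le}_{2\pi/m}\cap\{x_2\ge0\}$ up to the last reflection, and the final factor $\dirtreflect{H^{\le}_{\pi/m}}$ places it in $\fundReg{I_2(m)}$. Since $\dirtreflect{H^{\le}}(y)$ always lies in the $\reflect{H}$-orbit of $y$, the output stays in the $G$-orbit of $y$, so the output equals $\domPnt{y}{\fundReg{I_2(m)}}$ by uniqueness of the orbit representative in the fundamental domain.

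\medskip

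\noindent\textbf{Main obstacle.} The delicate part is the precise bookkeeping in the angle-reduction argument for Condition~(2): one must track, for a point written in polar form as an angle $\theta$ (modulo the symmetry), how each $\dirtreflect{H^{\le}_{2^j\pi/m}}$ acts on $\theta$, check that after the step using $H^{\le}_{2^j\pi/m}$ the angle lies in $[0,2^{j}\pi/m]$ (or the appropriate reduced range), and confirm the induction bottoms out correctly at $j=0$ giving the wedge $\{x_2\ge 0\}\cap H^{\le}_{\pi/m}$ of opening angle $\pi/m$. One must also handle the boundary/sign cases ($x_2=0$, points on reflection hyperplanes, and the definition of $\domPnt{y}{\fundReg{}}$ when $y$ lies on a wall) so that the stated equality is literally correct, not merely up to boundary ambiguity. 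Once that induction is in place, invoking Theorem~\ref{thm:seqReflRel} closes the proof immediately.
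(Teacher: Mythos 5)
Your proposal is correct and follows essentially the same route as the paper: apply Theorem~\ref{thm:seqReflRel} with $Q=\permutahd{I_2(m)}{P}$ and $W$ the $I_2(m)$-orbit of the vertex set, reducing everything to the observation that $\dirtreflect{H^{\le}_{\pi/m}}\circ\cdots\circ\dirtreflect{H^{\le}_{2^r\pi/m}}$ maps any point into $\fundReg{I_2(m)}$ while staying in its orbit, hence onto its fundamental-domain representative. The angle-halving bookkeeping you flag as the delicate step is exactly the part the paper also leaves as an unproved ``observing that,'' so your write-up is, if anything, slightly more explicit than the original.
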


\begin{proof}
	With $Q=\permutahd{I_{2}(m)}{P}$, the first condition  of Theorem~\ref{thm:seqReflRel} is  satisfied. Furthermore, we 
	have $Q=\conv{W}$ with $W=\setDef{\gamma.v}{\gamma\in I_2(m),v\text{ vertex of }P}$. Let $w\in W$ be some point with $w=\gamma.v$ for some vertex $v$ of~$P$ and  $\gamma\in I_2(m)$. Observing that   
	\begin{equation*}
	\dirtreflect{H^{\le}_{\pi/m}}\circ\dirtreflect{H^{\le}_{2\pi/m}}\circ\dots\circ
	\dirtreflect{H^{\le}_{2^r\pi/m}}(w)
	\end{equation*}
	is contained in~$\fundReg{I_2(m)}$, we conclude that it equals $\domPnt{w}{\fundReg{I_2(m)}}=\domPnt{v}{\fundReg{I_2(m)}}\in P$. Therefore, 
	 also the second condition of Theorem~\ref{thm:seqReflRel} is satisfied. Hence the claim follows.
\end{proof}

From Proposition~\ref{prop:I} and Remark~\ref{rem:sizeSeqRefl},  we can conclude the following theorem.
\begin{theorem}\label{thm:I}
	For each polytope $P\subseteq\R^2$ with $\domPnt{v}{\fundReg{I_2(m)}}\in P$ for each vertex~$v$ of~$P$ that admits an extended formulation with~$n'$ variables and~$f'$ inequalities, there is an extended formulation of $\permutahd{I_2(m)}{P}$ with $n'+\lceil \log(m)\rceil +1$ variables and $f'+2\lceil \log(m)\rceil+2$ inequalities.
\end{theorem}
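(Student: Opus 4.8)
The plan is to derive Theorem~\ref{thm:I} directly by combining Proposition~\ref{prop:I} with Remark~\ref{rem:sizeSeqRefl}, so the ``proof'' is really just a bookkeeping argument tracking the number of variables and inequalities through the sequential reflection construction. First I would recall that, by Proposition~\ref{prop:I}, the polytope $\permutahd{I_2(m)}{P}$ equals $\relim{P}{\mathcal{R}}$, where $\mathcal{R}$ is the polyhedral relation induced by the sequence
\begin{equation*}
	(\reflRelHalf{H^{\le}_{\pi/m}},\reflRelHalf{H^{\le}_{2\pi/m}},\reflRelHalf{H^{\le}_{4\pi/m}},\dots,\reflRelHalf{H^{\le}_{2^r\pi/m}})
\end{equation*}
of reflection relations with $r=\lceil\log(m)\rceil$. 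This sequence has length $r+1$ (the exponents run over $2^0,2^1,\dots,2^r$), and it is a sequential polyhedral relation of type $(2,\dots,2)$ consisting only of reflection relations on $\R^2$.

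Next I would invoke Remark~\ref{rem:sizeSeqRefl} with $n=2$ and with the length of the sequence being $r+1$: given an extended formulation of $P$ with $n'$ variables and $f'$ inequalities, one obtains an extended formulation of $\relim{P}{\mathcal{R}}$ with $n'+(r+1)$ variables and $f'+2(r+1)$ inequalities. Substituting $r=\lceil\log(m)\rceil$ gives $n'+\lceil\log(m)\rceil+1$ variables and $f'+2\lceil\log(m)\rceil+2$ inequalities, which is exactly the claimed bound. Since $\permutahd{I_2(m)}{P}=\relim{P}{\mathcal{R}}$, this extended formulation of $\relim{P}{\mathcal{R}}$ is an extended formulation of $\permutahd{I_2(m)}{P}$, completing the argument.

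There is essentially no obstacle here beyond making sure the length of the sequence is counted correctly: the subtlety is that the list of halfspaces $H^{\le}_{\pi/m},H^{\le}_{2\pi/m},H^{\le}_{4\pi/m},\dots,H^{\le}_{2^r\pi/m}$ contains $r+1$ entries, not $r$, because it starts at $2^0\pi/m=\pi/m$ and ends at $2^r\pi/m$. Everything else — that images under polyhedral relations are polyhedra, that $\delta_1$ of a reflection relation is $1$ (established in Section~\ref{sec:reflRel} from the constraint $y-x\in\orth{(\hyperPlane{a}{\beta})}$), and that each reflection relation has two facet-defining inequalities $\scalProd{a}{x}\le\scalProd{a}{y}$ and $\scalProd{a}{y}\le 2\beta-\scalProd{a}{x}$ — is already packaged into Remark~\ref{rem:sizeSeqRefl}. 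So the proof is a two-sentence citation of Proposition~\ref{prop:I} and Remark~\ref{rem:sizeSeqRefl} with the parameters substituted, and the only thing worth double-checking is the off-by-one in the sequence length.
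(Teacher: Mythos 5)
Your proposal is correct and is exactly the paper's own argument: the paper derives Theorem~\ref{thm:I} by combining Proposition~\ref{prop:I} with Remark~\ref{rem:sizeSeqRefl}, and your careful count of the sequence length as $r+1$ (angles $2^0\pi/m$ through $2^r\pi/m$) is precisely what yields the stated bounds $n'+\lceil\log(m)\rceil+1$ and $f'+2\lceil\log(m)\rceil+2$.
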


In particular,  we obtain an extended formulation  of a regular $m$-gon with $\lceil \log(m)\rceil+1$ variables and $2\lceil \log(m)\rceil+2$ inequalities by choosing $P=\{(1,0)\}$ in Theorem~\ref{thm:I}, thus reproving 
a result due to Ben-Tal and Nemirovski~\cite{BN01}.

\subsubsection{The reflection group $A_{n-1}$}
\label{subsubsec:An}

The group $A_{n-1}$ is generated by the reflections in~$\R^{n}$ at the hyperplanes~$\hyperPlane{\unitVec{k}-\unitVec{\ell}}{0}$ for all pairwise distinct $k,\ell\in\ints{n}$. It is the symmetry group of the $(n-1)$-dimensional (hence the index in the notation $A_{n-1}$) simplex $\convOp\{\unitVec{1},\dots,\unitVec{n}\}\subseteq\R^n$.
We choose $\fundReg{A_{n-1}}=\setDef{x\in\R^n}{x_1 \le\dots\le x_n}$  as the fundamental domain. The orbit of a point~$x\in\R^n$ under the action of~$A_{n-1}$ consists of all points which can be obtained from~$x$ by permuting coordinates.  Thus the $A_{n-1}$-permutahedron of a polytope $P\subseteq\R^n$ is
\begin{equation*}
	\permutahd{A_{n-1}}{P}=\convOp\bigcup_{\gamma\in\symGr{n}}\gamma.P\,,
\end{equation*}
where $\gamma.x$ is the vector obtained from $x\in\R^n$ by permuting the coordinates according to~$\gamma$.

Let us consider more closely the reflection relation $\transRel{k}{\ell}=\reflRel{\unitVec{k}-\unitVec{\ell}}{0}\subseteq\R^n\times\R^n$.
The corresponding reflection 
$\transp{k}{\ell}=\reflect{H_{k,\ell}}:\R^n\rightarrow\R^n$  with $H_{k,\ell}=\hyperPlane{\unitVec{k}-\unitVec{\ell}}{0}$ 
is  the transposition of coordinates~$k$ and~$\ell$, i.e., we have
\begin{equation*}
	\transp{k}{\ell}(x)_{i}=
	\begin{cases}
		x_{\ell} & \text{if }i=k\\
		x_k & \text{if }i={\ell}\\
		x_i & \text{otherwise}
	\end{cases}
\end{equation*}
for al $x\in\R^n$. The map $\dirtransp{k}{\ell}=\dirtreflect{H_{k,\ell}}:\R^n\rightarrow\R^n$ (assigning canonical preimages) is given by 
\begin{equation*}
	\dirtransp{k}{\ell}(y)=
	\begin{cases}
		\transp{k}{\ell}(y) & \text{if }y_k>y_{\ell}\\
		y                & \text{otherwise}
	\end{cases}
\end{equation*} 
for all $y\in\R^n$.

A sequence $(k_1,\ell_1),\dots,(k_r,\ell_r)\in\ints{n}\times\ints{n}$ with $k_i\ne\ell_i$ for all $i\in\ints{r}$ is called a \emph{sorting network} if 
	$\dirtransp{k_1}{\ell_1}\circ\cdots\circ\dirtransp{k_r}{\ell_r}(y)=\sort{y}$ holds for all $y\in\R^n$, where we denote by $\sort{y}\in\R^n$ the vector that is obtained from~$y$ by sorting the components in non-decreasing order. Note that we have $\domPnt{y}{\fundReg{A_{n-1}}}=\sort{y}$ for all $y\in\R^n$.

\begin{prop}\label{prop:sortingNetworks}
	Let~$\mathcal{R}$ be induced by a sequence $(\transRel{k_1}{\ell_1}, \ldots, \transRel{k_r}{\ell_r})$ of reflection relations, where  $(k_1,\ell_1),\dots,(k_r,\ell_r)\in\ints{n}\times\ints{n}$ is a sorting network. If $P\subseteq\R^n$ is a polytope with $\sort{v}\in P$ for each vertex~$v$ of~$P$, then we have
		$\relim{P}{\mathcal{R}}=\permutahd{A_{n-1}}{P}$.
\end{prop}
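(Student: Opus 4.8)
The plan is to verify that the two conditions of Theorem~\ref{thm:seqReflRel} hold with $Q=\permutahd{A_{n-1}}{P}$, exactly mirroring the proofs of Proposition~\ref{prop:changingSigns} and Proposition~\ref{prop:I}. First I would observe that each hyperplane $H_{k_i,\ell_i}=\hyperPlane{\unitVec{k_i}-\unitVec{\ell_i}}{0}$ contains the origin, that $\reflect{H_{k_i,\ell_i}}=\transp{k_i}{\ell_i}$ is a coordinate transposition, and that $\permutahd{A_{n-1}}{P}=\convOp\bigcup_{\gamma\in\symGr{n}}\gamma.P$ is by construction invariant under every coordinate permutation, in particular under each $\transp{k_i}{\ell_i}$; together with $P\subseteq\permutahd{A_{n-1}}{P}$ this gives condition~\eqref{cond::seqReflRel1}.

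For condition~\eqref{cond::seqReflRel2} I would write $Q=\conv{W}$ with $W=\setDef{\gamma.v}{\gamma\in\symGr{n},\ v\text{ vertex of }P}$, which is legitimate since the vertices of a convex hull of finitely many polytopes are among the vertices of those polytopes. Fix $w=\gamma.v\in W$. The defining property of a sorting network is precisely that $\dirtransp{k_1}{\ell_1}\circ\cdots\circ\dirtransp{k_r}{\ell_r}(w)=\sort{w}$; since sorting the coordinates of $\gamma.v$ yields the same vector as sorting the coordinates of $v$, we get $\sort{w}=\sort{v}=\domPnt{v}{\fundReg{A_{n-1}}}$, and the hypothesis $\sort{v}\in P$ gives $\dirtransp{k_1}{\ell_1}\circ\cdots\circ\dirtransp{k_r}{\ell_r}(w)\in P$. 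This is exactly condition~\eqref{cond::seqReflRel2}, so Theorem~\ref{thm:seqReflRel} yields $\relim{P}{\mathcal{R}}=Q=\permutahd{A_{n-1}}{P}$.

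There is essentially no hard part: everything reduces to matching up the hypotheses with those of Theorem~\ref{thm:seqReflRel}. The only point worth a sentence of care is the compatibility between the maps $\dirtransp{k}{\ell}$ and the relations $\transRel{k}{\ell}$, i.e.\ that $\dirtransp{k}{\ell}$ really is the canonical-preimage map $\dirtreflect{H_{k,\ell}}$ for the halfspace $\halfSpace{\unitVec{k}-\unitVec{\ell}}{0}$ — which holds because $\scalProd{\unitVec{k}-\unitVec{\ell}}{y}=y_k-y_\ell\le 0$ is the condition $y\in H^{\le}$, so $\dirtreflect{H_{k,\ell}}(y)=y$ when $y_k\le y_\ell$ and $=\transp{k}{\ell}(y)$ otherwise, matching the displayed formula for $\dirtransp{k}{\ell}$. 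This is already set up in the preceding text, so the proof is a short direct application of the theorem.
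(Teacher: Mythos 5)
Your proposal is correct and follows exactly the paper's own argument: verify the two conditions of Theorem~\ref{thm:seqReflRel} with $Q=\permutahd{A_{n-1}}{P}$ and $W=\setDef{\gamma.v}{\gamma\in\symGr{n},\ v\text{ vertex of }P}$, using the defining property of a sorting network to get $\dirtransp{k_1}{\ell_1}\circ\cdots\circ\dirtransp{k_r}{\ell_r}(w)=\sort{w}=\sort{v}\in P$. The extra details you supply (invariance of $Q$ under transpositions, the identification of $\dirtransp{k}{\ell}$ with $\dirtreflect{H_{k,\ell}}$) are points the paper leaves implicit but are checked correctly.
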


\begin{proof}
	With $Q=\permutahd{A_{n-1}}{P}$, the first condition  of Theorem~\ref{thm:seqReflRel} is  satisfied. Furthermore, we 
	have $Q=\conv{W}$ with $W=\setDef{\gamma.v}{\gamma\in\symGr{n},v\text{ vertex of }P}$. As, for every $w\in W$ with $w=\gamma.v$ for some vertex $v$ of~$P$ and  $\gamma\in\symGr{n}$, we  have 
	\begin{equation*}
	\dirtransp{k_1}{\ell_1}\circ\cdots\circ\dirtransp{k_r}{\ell_r}(w)=\sort{w}=\sort{v}\in P\,,
	\end{equation*}
	 also the second condition of Theorem~\ref{thm:seqReflRel} is satisfied. Hence the claim follows.
\end{proof}


As there are sorting networks of size $r=\bigO{n\log n}$ (see~\cite{AKS83}), from Proposition~\ref{prop:sortingNetworks} and Remark~\ref{rem:sizeSeqRefl}  we can conclude the following theorem
\begin{theorem}\label{thm:sortingNetworks}
	For each polytope $P\subseteq\R^n$ with $\sort{v}\in P$ for each vertex~$v$ of~$P$ that admits an extended formulation with~$n'$ variables and~$f'$ inequalities, there is an extended formulation of $\permutahd{A_{n-1}}{P}$ with $n'+\bigO{n\log n}$ variables and $f'+\bigO{n\log n}$ inequalities.
\end{theorem}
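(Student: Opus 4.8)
The plan is to combine Proposition~\ref{prop:sortingNetworks} with Remark~\ref{rem:sizeSeqRefl}, using the existence of small sorting networks as the only external input. First I would invoke the Ajtai--Koml\'os--Szemer\'edi result~\cite{AKS83}: there is a sorting network $(k_1,\ell_1),\dots,(k_r,\ell_r)\in\ints{n}\times\ints{n}$ of length $r=\bigO{n\log n}$, i.e.\ a sequence of compare--exchange operations such that $\dirtransp{k_1}{\ell_1}\circ\cdots\circ\dirtransp{k_r}{\ell_r}(y)=\sort{y}$ for all $y\in\R^n$. (Here I would note that the standard AKS network is stated for the comparator model on $n$ wires, and that this is exactly the $\dirtransp{k}{\ell}$ model: a comparator on wires $k<\ell$ that puts the smaller value on wire $k$ is precisely $\dirtransp{k}{\ell}$ as defined above.)

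Next, fix such a sorting network and let $\mathcal{R}$ be the polyhedral relation induced by the sequential polyhedral relation $(\transRel{k_1}{\ell_1},\dots,\transRel{k_r}{\ell_r})$, which is of type $(n,\dots,n)$ and length $r$, consisting of reflection relations only. By hypothesis $P$ satisfies $\sort{v}\in P$ for every vertex $v$ of $P$, so Proposition~\ref{prop:sortingNetworks} applies and gives $\relim{P}{\mathcal{R}}=\permutahd{A_{n-1}}{P}$. Then I would apply Remark~\ref{rem:sizeSeqRefl} verbatim: starting from an extended formulation of $P$ with $n'$ variables and $f'$ inequalities, it produces an extended formulation of $\relim{P}{\mathcal{R}}=\permutahd{A_{n-1}}{P}$ with $n'+r$ variables and $f'+2r$ inequalities. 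Substituting $r=\bigO{n\log n}$ yields the claimed bounds $n'+\bigO{n\log n}$ variables and $f'+\bigO{n\log n}$ inequalities.

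There is essentially no obstacle here beyond bookkeeping; the entire content has already been front-loaded into Proposition~\ref{prop:sortingNetworks} (which in turn rests on Theorem~\ref{thm:seqReflRel}) and Remark~\ref{rem:sizeSeqRefl}. The one point that deserves a sentence of care is the translation between the abstract ``sorting network'' notion used in Proposition~\ref{prop:sortingNetworks} and the comparator-network literature: one must check that the AKS construction, which sorts into non-decreasing order by a logarithmic-depth network of $\bigO{n\log n}$ comparators, is literally a sorting network in the sense defined before Proposition~\ref{prop:sortingNetworks}, i.e.\ that composing the corresponding maps $\dirtransp{k_i}{\ell_i}$ in order realizes $y\mapsto\sort{y}$. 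This is immediate once one observes that each individual comparator is one such map, so I would simply remark on it rather than prove anything. Everything else is direct substitution of the size bound into the statements already established.
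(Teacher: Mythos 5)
Your proposal is correct and follows exactly the paper's own route: the paper derives Theorem~\ref{thm:sortingNetworks} by citing the $\bigO{n\log n}$-size sorting networks of Ajtai--Koml\'os--Szemer\'edi and then combining Proposition~\ref{prop:sortingNetworks} with Remark~\ref{rem:sizeSeqRefl}, which is precisely your argument. The extra remark on matching the comparator model to the maps $\dirtransp{k}{\ell}$ is a reasonable point of care but adds nothing beyond what the paper implicitly assumes.
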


Choosing the one-point polytope $P=\{(1,2,\dots,n)\}\subseteq\R^n$, Theorem~\ref{thm:sortingNetworks} yields basically the same extended formulation with $\bigO{n\log n}$ variables and inequalities of the permutahedron
	$\perm{n}=\permutahd{A_{n-1}}{P}$ that has been constructed 
 by Goemans~\cite{Goe09} (see the remarks in the Introduction). 

\subsubsection{The reflection group $B_n$}
The group $B_{n}$ is generated by the reflections in~$\R^{n}$ at the hyperplanes~$\hyperPlane{\unitVec{k}+\unitVec{\ell}}{0}$, $\hyperPlane{\unitVec{k}-\unitVec{\ell}}{0}$ and~$\hyperPlane{\unitVec{k}}{0}$ for all pairwise distinct $k,\ell\in\ints{n}$.
 It is the symmetry group of both the $n$-dimensional cube $\convOp\{-1,+1\}^n$ and the $n$-dimensional cross-polytope $\convOp\{\pm\unitVec{1},\dots,\pm\unitVec{n}\}$.  
We choose $\fundReg{B_{n}}=\setDef{x\in\R^n}{0\le x_1 \le\dots\le x_n}$  as the fundamental domain. The orbit of a point~$x\in\R^n$ under the action of~$B_{n}$ consists of all points which can be obtained from~$x$ by permuting its coordinates and changing the signs of some subset of its coordinates. 	Note that we have $\domPnt{y}{\fundReg{B_{n}}}=\sortabs{y}$ for all $y\in\R^n$, where $\sortabs{y}=\sort{v'}$ with $v'=\abs{v}$.

\begin{prop}\label{prop:Bn}
	Let~$\mathcal{R}$ be induced by a sequence $(\transRel{k_1}{\ell_1}, \ldots, \transRel{k_r}{\ell_r},S_1, \ldots, S_n)$ of reflection relations, where  $(k_1,\ell_1),\dots,(k_r,\ell_r)\in\ints{n}\times\ints{n}$ is a sorting network (and the $S_i$ are defined as at the end of Section~\ref{sec:reflRel}). If $P\subseteq\R^n$ is a polytope with $\sortabs{v}\in P$ for each vertex~$v$ of~$P$, then we have
		$\relim{P}{\mathcal{R}}=\permutahd{B_{n}}{P}$.
\end{prop}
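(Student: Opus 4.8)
The plan is to invoke Theorem~\ref{thm:seqReflRel} with $Q=\permutahd{B_n}{P}$ and $W=\setDef{\gamma.v}{\gamma\in B_n,\,v\text{ vertex of }P}$, exactly mirroring the proofs of Proposition~\ref{prop:sortingNetworks} and Proposition~\ref{prop:changingSigns}. Condition~\eqref{cond::seqReflRel1} is immediate: $P\subseteq\permutahd{B_n}{P}$ by definition, and $\permutahd{B_n}{P}$ is stable under each of the reflections $\transp{k_i}{\ell_i}$ and $\sign{j}$ since all of these belong to $B_n$ (they are among its generators) and $\permutahd{B_n}{P}$ is the convex hull of a $B_n$-orbit, hence $G$-invariant; a reflection maps the generating set into itself and commutes with taking convex hulls. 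So the whole content is checking condition~\eqref{cond::seqReflRel2}.

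For~\eqref{cond::seqReflRel2}, I would compute, for an arbitrary $w=\gamma.v\in W$,
\begin{equation*}
	\dirtransp{k_1}{\ell_1}\circ\cdots\circ\dirtransp{k_r}{\ell_r}\circ\dirtsign{1}\circ\cdots\circ\dirtsign{n}(w)\,.
\end{equation*}
Applying $\dirtsign{1}\circ\cdots\circ\dirtsign{n}$ to $w$ replaces every coordinate by its absolute value, yielding $\abs{w}$, whose coordinates are already nonnegative; then applying the sorting-network composition $\dirtransp{k_1}{\ell_1}\circ\cdots\circ\dirtransp{k_r}{\ell_r}$ sorts these nonnegative coordinates into non-decreasing order, producing $\sort{(\abs{w})}=\sortabs{w}$. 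Since $w=\gamma.v$ is obtained from $v$ by permuting coordinates and flipping some signs, $\abs{w}$ is a permutation of $\abs{v}$, so $\sortabs{w}=\sortabs{v}$, which by hypothesis (since $\sortabs{v}=\domPnt{v}{\fundReg{B_n}}$ and this lies in $P$ for each vertex $v$) belongs to $P$. The one routine point worth stating explicitly is that the sorting network behaves correctly here: a sorting network is defined by $\dirtransp{k_1}{\ell_1}\circ\cdots\circ\dirtransp{k_r}{\ell_r}(y)=\sort{y}$ for \emph{all} $y\in\R^n$, so in particular it sorts the nonnegative vector $\abs{w}$, and the $\dirtsign{j}$ maps act only on the absolute-value step without disturbing the subsequent sorting since they leave already-nonnegative entries fixed.

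There is no real obstacle; the only thing to be a little careful about is the order of composition and the direction of the canonical-preimage maps, i.e., verifying that the composite $\dirtransp{k_1}{\ell_1}\circ\cdots\circ\dirtransp{k_r}{\ell_r}\circ\dirtsign{1}\circ\cdots\circ\dirtsign{n}$ is indeed $\dirtreflect{H^{\le}_1}\circ\cdots\circ\dirtreflect{H^{\le}_r}$ for the sequence $(\transRel{k_1}{\ell_1},\dots,\transRel{k_r}{\ell_r},S_1,\dots,S_n)$ as required by Theorem~\ref{thm:seqReflRel}~\eqref{cond::seqReflRel2} (the signing relations come last in the sequence, hence their canonical-preimage maps come first in the composition). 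Once that bookkeeping is in place, both conditions of Theorem~\ref{thm:seqReflRel} hold and $\relim{P}{\mathcal{R}}=\permutahd{B_n}{P}$ follows at once. I would write this up in three or four sentences, essentially copying the structure of the proof of Proposition~\ref{prop:sortingNetworks} with the signing step appended.
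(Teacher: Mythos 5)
Your proposal is correct and follows essentially the same route as the paper: both verify the two conditions of Theorem~\ref{thm:seqReflRel} with $Q=\permutahd{B_n}{P}$ and compute $\dirtransp{k_1}{\ell_1}\circ\cdots\circ\dirtransp{k_r}{\ell_r}\circ\dirtsign{1}\circ\cdots\circ\dirtsign{n}(w)=\sortabs{w}=\sortabs{v}\in P$. The extra bookkeeping you spell out (the order of the canonical-preimage maps relative to the order of the relations in the sequence) is exactly right and is left implicit in the paper.
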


\begin{proof}
	With $Q=\permutahd{B_{n}}{P}$, the first condition  of Theorem~\ref{thm:seqReflRel} is  satisfied. Furthermore, we 
	have $Q=\conv{W}$ with $W=\setDef{\gamma.\epsilon.v}{\gamma\in\symGr{n},\epsilon\in\{-,+\}^n,v\text{ vertex of }P}$. As, for every $w\in W$ with $w=\gamma.\epsilon.v$ for some vertex $v$ of~$P$ and $\gamma\in\symGr{n}$, $\epsilon\in\{-,+\}^n$,  we  have 
	\begin{equation*}
	\dirtransp{k_1}{\ell_1}\circ\cdots\circ\dirtransp{k_r}{\ell_r}\circ\dirtsign{1}\circ\dots\circ\dirtsign{n}(w)=\sortabs{w}=\sortabs{v}\in P\,,
	\end{equation*}
	 also the second condition of Theorem~\ref{thm:seqReflRel} is satisfied. Hence the claim follows.
\end{proof}

As for $A_{n-1}$, we thus can conclude the following from Proposition~\ref{prop:Bn} and Remark~\ref{rem:sizeSeqRefl}.

\begin{theorem}
	For each polytope $P\subseteq\R^n$ with $\sortabs{v}\in P$ for each vertex~$v$ of~$P$ that admits an extended formulation with~$n'$ variables and~$f'$ inequalities, there is an extended formulation of $\permutahd{B_{n}}{P}$ with $n'+\bigO{n\log n}$ variables and $f'+\bigO{n\log n}$ inequalities.
\end{theorem}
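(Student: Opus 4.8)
The plan is to mirror exactly the pattern already used for $A_{n-1}$ and $B_{n}$ in Proposition~\ref{prop:Bn} and the theorem preceding it, only this time plugging in the combinatorial data appropriate for $B_n$ once more — i.e., this is the quantitative corollary of Proposition~\ref{prop:Bn}. First I would invoke Proposition~\ref{prop:Bn}: given the polytope $P\subseteq\R^n$ with $\sortabs{v}\in P$ for every vertex $v$ of $P$, the polyhedral relation $\mathcal{R}$ induced by the sequence $(\transRel{k_1}{\ell_1},\ldots,\transRel{k_r}{\ell_r},S_1,\ldots,S_n)$ — where $(k_1,\ell_1),\dots,(k_r,\ell_r)$ is a sorting network — satisfies $\relim{P}{\mathcal{R}}=\permutahd{B_n}{P}$. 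So it suffices to count variables and inequalities in the extended formulation of $\relim{P}{\mathcal{R}}$ produced by Remark~\ref{rem:sizeSeqRefl}.

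Next I would apply Remark~\ref{rem:sizeSeqRefl}. The sequence consists purely of reflection relations of type $(n,\dots,n)$; its length is $r+n$, where $r$ is the size of the sorting network. By Remark~\ref{rem:sizeSeqRefl}, starting from an extended formulation of $P$ with $n'$ variables and $f'$ inequalities, one obtains an extended formulation of $\relim{P}{\mathcal{R}}=\permutahd{B_n}{P}$ with $n'+(r+n)$ variables and $f'+2(r+n)$ inequalities. Then I would cite the Ajtai–Komlós–Szemerédi result~\cite{AKS83} (already used for Theorem~\ref{thm:sortingNetworks}) that there exist sorting networks of size $r=\bigO{n\log n}$. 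Substituting, $n'+(r+n)=n'+\bigO{n\log n}+n=n'+\bigO{n\log n}$ and $f'+2(r+n)=f'+\bigO{n\log n}$, which is exactly the claimed bound.

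Since every ingredient is already in place, there is essentially no obstacle here: the proof is a two-line deduction combining Proposition~\ref{prop:Bn}, Remark~\ref{rem:sizeSeqRefl}, and the existence of $\bigO{n\log n}$-size sorting networks. The only point worth a moment's care is the bookkeeping of the length of the composite sequence — it is $r$ transposition relations followed by $n$ sign-change relations, so length $r+n$, and the additive $n$ is harmlessly absorbed into the $\bigO{n\log n}$ term (as is, similarly, the hidden constant in the sorting-network bound). One could optionally remark that if one uses the trivial ``bubble-sort'' network $r=\bigOOp\!\left(n^2\right)$ the construction still works but gives the weaker bound $f'+\bigOOp\!\left(n^2\right)$; the point of citing~\cite{AKS83} is precisely to get the near-linear dependence. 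Thus the statement follows.

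\begin{proof}
	By Proposition~\ref{prop:Bn}, if $(k_1,\ell_1),\dots,(k_r,\ell_r)\in\ints{n}\times\ints{n}$ is a sorting network and $\mathcal{R}$ is the polyhedral relation induced by the sequence $(\transRel{k_1}{\ell_1},\ldots,\transRel{k_r}{\ell_r},S_1,\ldots,S_n)$ of reflection relations, then $\relim{P}{\mathcal{R}}=\permutahd{B_n}{P}$ for every polytope $P\subseteq\R^n$ with $\sortabs{v}\in P$ for each vertex $v$ of $P$. This sequence has type $(n,\dots,n)$ and length $r+n$, and it consists of reflection relations only. Hence, by Remark~\ref{rem:sizeSeqRefl}, from an extended formulation of $P$ with $n'$ variables and $f'$ inequalities we obtain an extended formulation of $\relim{P}{\mathcal{R}}=\permutahd{B_n}{P}$ with $n'+(r+n)$ variables and $f'+2(r+n)$ inequalities. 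Choosing, as in~\cite{AKS83}, a sorting network of size $r=\bigO{n\log n}$, we have $n'+(r+n)=n'+\bigO{n\log n}$ and $f'+2(r+n)=f'+\bigO{n\log n}$, which proves the claim.
\end{proof}
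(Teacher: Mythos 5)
Your proposal is correct and follows exactly the paper's route: the paper derives this theorem by combining Proposition~\ref{prop:Bn} with Remark~\ref{rem:sizeSeqRefl} and the existence of $\bigO{n\log n}$-size sorting networks from~\cite{AKS83}, just as you do. Your bookkeeping of the sequence length $r+n$ and the resulting counts $n'+(r+n)$ variables and $f'+2(r+n)$ inequalities is accurate.
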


\subsubsection{The reflection group $D_n$}
\label{subsubsec:D}
The group $D_{n}$ is generated by the reflections in~$\R^{n}$ at the hyperplanes~$\hyperPlane{\unitVec{k}+\unitVec{\ell}}{0}$ and $\hyperPlane{\unitVec{k}-\unitVec{\ell}}{0}$  for all pairwise distinct $k,\ell\in\ints{n}$. Thus, $D_n$ is a proper subgroup of~$B_n$. It is not the symmetry group of a polytope. 
We choose $\fundReg{D_{n}}=\setDef{x\in\R^n}{|x_1|\le x_2 \le\dots\le x_n}$  as the fundamental domain. The orbit of a point~$x\in\R^n$ under the action of~$D_{n}$ consists of all points which can be obtained from~$x$ by permuting its coordinates and changing the signs of an \emph{even} number of its coordinates. 
For every~$x\in\R^n$, the point $\domPnt{x}{\fundReg{D_n}}$ arises from $\sortabs{x}$ by multiplying the first component by $-1$ in case~$x$ has an odd number of negative components.
For $k,\ell\in\ints{n}$ with $k\ne\ell$, we denote the ordered pair   $(\reflRel{\unitVec{k}-\unitVec{\ell}}{0}, \reflRel{-\unitVec{k}-\unitVec{\ell}}{0})$ of reflection relations by $E_{k,\ell}$. 



\begin{prop}\label{prop:Dn}
	Let~$\mathcal{R}$ be induced by a sequence $(\transRel{k_1}{\ell_1}, \ldots, \transRel{k_r}{\ell_r},E_{1,2},\dots,E_{n-1,n})$ of polyhedral relations, where  $(k_1,\ell_1),\dots,(k_r,\ell_r)\in\ints{n}\times\ints{n}$ is a sorting network. If $P\subseteq\R^n$ is a polytope with $\domPnt{x}{\fundReg{D_n}}\in P$ for each vertex~$v$ of~$P$, then we have
		$\relim{P}{\mathcal{R}}=\permutahd{D_{n}}{P}$.
\end{prop}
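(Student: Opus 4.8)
The plan is to verify the two hypotheses of Theorem~\ref{thm:seqReflRel}, exactly in the pattern already established for Propositions~\ref{prop:sortingNetworks} and~\ref{prop:Bn}. Set $Q=\permutahd{D_n}{P}$. For the first condition of Theorem~\ref{thm:seqReflRel}, observe that $P\subseteq Q$ is immediate from the definition of the $G$-permutahedron, and that $Q$ is invariant under every reflection appearing in the sequence: the transpositions $\transp{k_i}{\ell_i}=\reflect{H_{k_i,\ell_i}}$ and the reflections $\reflect{\hyperPlane{\unitVec{k}-\unitVec{\ell}}{0}}$, $\reflect{\hyperPlane{-\unitVec{k}-\unitVec{\ell}}{0}}$ occurring inside each $E_{k,\ell}$ all lie in $D_n$ (the first by the generating set of $D_n$; for the second, $\reflect{\hyperPlane{\unitVec{k}+\unitVec{\ell}}{0}}$ is a generator and $\hyperPlane{-\unitVec{k}-\unitVec{\ell}}{0}=\hyperPlane{\unitVec{k}+\unitVec{\ell}}{0}$), so each maps the orbit of $P$ into itself and hence fixes $Q=\conv{\bigcup_{\gamma\in D_n}\gamma.P}$.

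For the second condition, write $Q=\conv{W}$ with $W=\setDef{\gamma.v}{\gamma\in D_n,\ v\text{ vertex of }P}$, so it suffices to show that applying the canonical-preimage maps in the stated order sends every $w=\gamma.v\in W$ into $P$. The key computation is that
\begin{equation*}
	\dirtransp{k_1}{\ell_1}\circ\cdots\circ\dirtransp{k_r}{\ell_r}\circ\dirtreflect{\hyperPlane{\unitVec{1}-\unitVec{2}}{0}}\circ\dirtreflect{\hyperPlane{-\unitVec{1}-\unitVec{2}}{0}}\circ\cdots\circ\dirtreflect{\hyperPlane{\unitVec{n-1}-\unitVec{n}}{0}}\circ\dirtreflect{\hyperPlane{-\unitVec{n-1}-\unitVec{n}}{0}}(w)=\domPnt{w}{\fundReg{D_n}}\,,
\end{equation*}
combined with $\domPnt{w}{\fundReg{D_n}}=\domPnt{v}{\fundReg{D_n}}\in P$ (since $w$ and $v$ lie in the same $D_n$-orbit, and the hypothesis guarantees $\domPnt{v}{\fundReg{D_n}}\in P$ for every vertex $v$). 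The sorting-network prefix sorts the coordinates into non-decreasing order, as in Proposition~\ref{prop:sortingNetworks}; one then has to check that the block $E_{1,2},\dots,E_{n-1,n}$, read through the canonical-preimage maps, transforms a sorted vector $u$ into $\domPnt{u}{\fundReg{D_n}}$, i.e.\ into the vector with $|u_1|\le u_2\le\dots\le u_n$ obtained from $\sortabs{u}$ by flipping the sign of the first coordinate exactly when $u$ has an odd number of negative entries. This is the one genuinely new verification; it can be done by an explicit pass through the maps, using that $\dirtreflect{\hyperPlane{\unitVec{k}-\unitVec{\ell}}{0}}$ swaps coordinates $k,\ell$ iff $y_k>y_\ell$, while $\dirtreflect{\hyperPlane{-\unitVec{k}-\unitVec{\ell}}{0}}$ applies $y_k,y_\ell\mapsto -y_\ell,-y_k$ iff $-y_k>y_\ell$, i.e.\ iff $y_k+y_\ell<0$. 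Processing $E_{1,2}$ first, then $E_{2,3}$, and so on, one checks inductively that after the block the entries in positions $2,\dots,n$ are $\sortabs{u}$ in positions $2,\dots,n$, the entry in position $1$ is $\pm(\sortabs{u})_1$, and the total number of sign flips has the correct parity, so the result is exactly $\domPnt{u}{\fundReg{D_n}}$.

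The main obstacle is precisely this last bookkeeping: one must track simultaneously (i) that the sequence $E_{1,2},\dots,E_{n-1,n}$ genuinely carries the smallest absolute value down to coordinate $1$ and keeps the remaining coordinates sorted and nonnegative, and (ii) that the parity of the number of negative coordinates is preserved throughout, so that the sign landing on coordinate $1$ is the one required by the description of $\fundReg{D_n}$. Everything else — $P\subseteq Q$, the $D_n$-invariance of $Q$, the passage from $W$ to $\conv W$ via convexity of $\relim{P}{\mathcal{R}}$, and the reduction via \eqref{eq:Hstar} — is identical to the proofs of Propositions~\ref{prop:sortingNetworks} and~\ref{prop:Bn}, and I would present it in the same compressed style.
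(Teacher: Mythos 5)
Your overall architecture (verify the two conditions of Theorem~\ref{thm:seqReflRel}, condition~(1) via $D_n$-invariance of $Q$, condition~(2) via an explicit formula for the canonical preimage of each $w\in W$) is the same as the paper's, and your displayed composition $\dirtransp{k_1}{\ell_1}\circ\cdots\circ\dirtransp{k_r}{\ell_r}\circ\eta^{\star}_{1,2}\circ\cdots\circ\eta^{\star}_{n-1,n}$ with $\eta^{\star}_{k,\ell}=\dirtreflect{\halfSpace{\unitVec{k}-\unitVec{\ell}}{0}}\circ\dirtreflect{\halfSpace{-\unitVec{k}-\unitVec{\ell}}{0}}$ is exactly the one the paper asserts equals $\domPnt{w}{\fundReg{D_n}}$. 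The gap is in your plan for \emph{verifying} that identity: you run the maps in the wrong order, and the intermediate invariant you state is false. By~\eqref{eq:Hstar} the canonical preimage maps compose in \emph{reverse} order of the relation sequence; since the sorting network comes first and $E_{1,2},\dots,E_{n-1,n}$ come last in the sequence, one must apply $\eta^{\star}_{n-1,n}$ first, then $\eta^{\star}_{n-2,n-1},\dots,\eta^{\star}_{1,2}$, and only \emph{then} the maps $\dirtransp{k_i}{\ell_i}$, which sort at the very end. You instead propose to ``sort first, then run $E_{1,2},\dots,E_{n-1,n}$ on the sorted vector $u$'' and claim that after this block positions $2,\dots,n$ carry the corresponding entries of $\sortabs{u}$. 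Both claims fail: for $u=(-10,-5,1)$ (already non-decreasing, an even number of negative entries, so $\domPnt{u}{\fundReg{D_3}}=(1,5,10)$), applying $\eta^{\star}_{1,2}$ gives $(5,10,1)$ and then $\eta^{\star}_{2,3}$ gives $(5,1,10)$, which is neither sorted nor equal to $\domPnt{u}{\fundReg{D_3}}$. A single pass of these ``absolute-value comparators'' does not sort --- which is precisely why the sorting network must act \emph{after} the $E$-pass in the preimage composition.

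The correct bookkeeping (which is what the paper's one-line assertion encodes) is this: $\eta^{\star}_{k,k+1}$ replaces $(y_k,y_{k+1})$ by the unique pair among $(y_k,y_{k+1})$, $(y_{k+1},y_k)$, $(-y_k,-y_{k+1})$, $(-y_{k+1},-y_k)$ satisfying $|y'_k|\le y'_{k+1}$, i.e.\ it places $\max(|y_k|,|y_{k+1}|)$ in position $k{+}1$ and $\pm\min(|y_k|,|y_{k+1}|)$ in position $k$, while preserving the parity of the number of negative entries (each sign flip involves two coordinates). Running $\eta^{\star}_{n-1,n},\dots,\eta^{\star}_{1,2}$ on the raw $w$ therefore bubbles $\min_i|w_i|$ down into position $1$, carrying a sign that records the parity, and leaves positions $2,\dots,n$ nonnegative but in general \emph{unsorted}; the subsequent sorting-network maps then produce exactly $\domPnt{w}{\fundReg{D_n}}$. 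With the order corrected in this way, the rest of your argument --- condition~(1), the description of $W$, and the passage from $W$ to $\conv{W}$ --- is correct and matches the paper.
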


\begin{proof}
	With $Q=\permutahd{D_{n}}{P}$, the first condition  of Theorem~\ref{thm:seqReflRel} is  satisfied.
	Let us denote by $\{-,+\}^n_{\text{even}}$ the set of all $\epsilon\in\{-,+\}^n$ with an even number of components equal to minus. Then, we 
	have $Q=\conv{W}$ with $W=\setDef{\gamma.\epsilon.v}{\gamma\in\symGr{n},\epsilon\in\{-,+\}^n_{\text{even}},v\text{ vertex of }P}$. 
	For $k,\ell\in\ints{n}$ with $k\ne\ell$,
	we define $\eta^{\star}_{k,\ell}=\dirtreflect{\halfSpace{\unitVec{k}-\unitVec{\ell}}{0}}\circ\dirtreflect{\halfSpace{-\unitVec{k}-\unitVec{\ell}}{0}}$. For each $y\in\R^n$, the vector $\eta^{\star}_{k,\ell}(y)$ is the vector~$y'\in\{y,\tau_{k,\ell}(y),\rho_{k,\ell}(y),\rho_{k,\ell}(\tau_{k,\ell}(y))\}$ with $|y'_k|\le y'_{\ell}$, where $\rho_{k,\ell}(y)$ arises from~$y$ by multiplying both components~$k$ and~$\ell$ by $-1$.
	As, for every $w\in W$ with $w=\gamma.\epsilon.v$ for some vertex $v$ of~$P$ and $\gamma\in\symGr{n}$, $\epsilon\in\{-,+\}^n_{\text{even}}$,  we  have 
	\begin{equation*}
	\dirtransp{k_1}{\ell_1}\circ\cdots\circ\dirtransp{k_r}{\ell_r}\circ\eta^{\star}_{1,2}\circ\dots\circ\eta^{\star}_{n-1,n}(w)=\domPnt{w}{\fundReg{D_n}}=\domPnt{v}{\fundReg{D_n}}\in P\,,
	\end{equation*}
	 also the second condition of Theorem~\ref{thm:seqReflRel} is satisfied. Hence the claim follows.
\end{proof}

And again, similarly to the cases $A_{n-1}$ and $B_n$, we derive the following result from Proposition~\ref{prop:Dn} and Remark~\ref{rem:sizeSeqRefl}.

\begin{theorem}\label{thm:Dn}
	For each polytope $P\subseteq\R^n$ with $\domPnt{v}{\fundReg{D_n}}(v)\in P$ for each vertex~$v$ of~$P$ that admits an extended formulation with~$n'$ variables and~$f'$ inequalities, there is an extended formulation of $\permutahd{D_{n}}{P}$ with $n'+\bigO{n\log n}$ variables and $f'+\bigO{n\log n}$ inequalities.
\end{theorem}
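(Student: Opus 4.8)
The plan is to combine Proposition~\ref{prop:Dn} with Remark~\ref{rem:sizeSeqRefl}, following verbatim the pattern used to derive Theorem~\ref{thm:I}, Theorem~\ref{thm:sortingNetworks}, and the corresponding result for~$B_n$. First I would invoke~\cite{AKS83} to fix a sorting network $(k_1,\ell_1),\dots,(k_r,\ell_r)\in\ints{n}\times\ints{n}$ of size $r=\bigO{n\log n}$, and let~$\mathcal{R}$ be the polyhedral relation induced by the sequence $(\transRel{k_1}{\ell_1}, \ldots, \transRel{k_r}{\ell_r},E_{1,2},\dots,E_{n-1,n})$ from Proposition~\ref{prop:Dn}. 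Under the hypothesis $\domPnt{v}{\fundReg{D_n}}\in P$ for every vertex~$v$ of~$P$, that proposition gives $\relim{P}{\mathcal{R}}=\permutahd{D_n}{P}$, so only the size estimate remains to be checked.

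The single point worth spelling out is that each~$E_{k,\ell}$ is itself the ordered pair of reflection relations $(\reflRel{\unitVec{k}-\unitVec{\ell}}{0},\reflRel{-\unitVec{k}-\unitVec{\ell}}{0})$; unwinding these pairs, the sequence defining~$\mathcal{R}$ is a sequential polyhedral relation of type $(n,\dots,n)$ consisting of exactly $r+2(n-1)$ reflection relations. Since $r+2(n-1)=\bigO{n\log n}$, Remark~\ref{rem:sizeSeqRefl} applies with this length: starting from an extended formulation of~$P$ with~$n'$ variables and~$f'$ inequalities, it produces an extended formulation of $\relim{P}{\mathcal{R}}=\permutahd{D_n}{P}$ with $n'+(r+2(n-1))=n'+\bigO{n\log n}$ variables and $f'+2(r+2(n-1))=f'+\bigO{n\log n}$ inequalities, which is the claim.

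I do not expect any real obstacle here: the correctness of the ``even sign change'' gadget~$E_{k,\ell}$ (via the auxiliary maps $\eta^{\star}_{k,\ell}$ taking canonical preimages into $\fundReg{D_n}$) is already contained in Proposition~\ref{prop:Dn}, and the bookkeeping of variables and inequalities through a chain of reflection relations, using $\delta_1(\reflRel{a}{\beta})=1$, is already contained in Remark~\ref{rem:sizeSeqRefl}. All that is left is the elementary counting above, together with the trivial observation that a pair of reflection relations contributes two links to the chain.
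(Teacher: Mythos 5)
Your proposal is correct and follows exactly the route the paper takes: it derives the theorem from Proposition~\ref{prop:Dn} together with Remark~\ref{rem:sizeSeqRefl}, with the only bookkeeping point being that the $n-1$ pairs $E_{k,\ell}$ contribute $2(n-1)$ reflection relations to a chain of total length $r+2(n-1)=\bigO{n\log n}$. The paper leaves this counting implicit, so your explicit version is, if anything, slightly more complete.
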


If we restrict attention to the  polytopes $P=\{(-1,1,\dots,1)\}\subseteq\R^n$ and  $P=\{(1,1,\dots,1)\}\subseteq\R^n$, then we can remove the reflection relations $T_{i_1,j_1},\dots,T_{i_r,j_r}$ from the construction in Proposition~\ref{prop:Dn}. Thus, we obtain extended formulations with $2(n-1)$ variables and $4(n-1)$ inequalities of the convex hulls of all vectors in $\{-1,+1\}^n$ with an odd respectively even number of ones. Thus, applying the affine transformation of  $\R^n$ given by  $y\mapsto \frac12(\oneVec{}-y)$, we derive extended formulations with $2(n-1)$ variables and $4(n-1)$ inequalities for the \emph{parity polytopes} $\convOp\setDef{v\in\{0,1\}^n}{\sum_i v_i\text{ odd}}$ and $\convOp\setDef{v\in\{0,1\}^n}{\sum_i v_i\text{ even}}$, respectively (reproving a result by Carr and Konjevod~\cite{CK04}).

\subsection{Huffman Polytopes}
\label{subsec:huffman}

A vector $v\in\R^n$ (with $n\ge 2$) is a \emph{Huffman-vector} if there is a rooted binary tree with~$n$ leaves (all non-leaf nodes having two children) and a labeling of the leaves by $1,\dots,n$ such that, for each~$i\in\ints{n}$, the number of arcs on the path from the root to the leaf labelled~$i$ equals~$v_i$. Let us denote by $\huffVecs{n}$ the set of all Huffman-vectors in~$\R^n$, and by $\huffPoly{n}=\conv{\huffVecs{n}}$ the \emph{Huffman polytope}. Note that currently no linear description of $\huffPoly{n}$ in $\R^n$ is known. In fact, it seems that such descriptions are extremely complicated. For instance, Nguyen, Nguyen, and Maurras~\cite{NNM10} proved that $\huffPoly{n}$ has $(\Omega(n))!$ facets.

It is easy to see that Huffman-vectors and -polytopes have the following properties.
\begin{obs}\label{obs:huff}\mbox{}
	\begin{enumerate}
		\item For each $\gamma\in\symGr{n}$, we have $\gamma.\huffVecs{n}=\huffVecs{n}$. 
		\item For each $v\in\huffVecs{n}$ there are at least two components of~$v$ equal to $\max\setDef{v_k}{k\in\ints{n}}$.
		\item For each $v\in\huffVecs{n}$  ($n\ge 3$) and $v_i=v_j=\max\setDef{v_k}{k\in\ints{n}}$ for some pair $i<j$, we have  
		\begin{equation*}
			(v_1,\dots,v_{i-1},v_i-1,v_{i+1},\dots,v_{j-1},v_{j+1},\dots,v_n)\in\huffVecs{n-1}\,.
		\end{equation*}
		\item For each $w'\in\huffVecs{n-1}$ ($n\ge 3$), we have  $(w'_1,\dots,w'_{n-2},w'_{n-1}+1,w'_{n-1}+1)\in\huffVecs{n}$.
	\end{enumerate}
\end{obs}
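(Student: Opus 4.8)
The plan is to read each of the four claims directly off the definition of $\huffVecs{n}$ via rooted binary trees; throughout, ``tree'' means a rooted binary tree in which every internal node has exactly two children, and I say a tree \emph{realizes} $v\in\R^n$ if a labeling of its $n$ leaves by $\ints{n}$ witnesses $v\in\huffVecs{n}$ in the sense of the definition. Part~(1) is immediate: if a tree realizes $v$ under a leaf-labeling $\lambda$, then the same tree under $\gamma\circ\lambda$ realizes $\gamma.v$, so $\gamma.\huffVecs{n}=\huffVecs{n}$. For part~(2) I would fix a realizing tree for $v$ and a leaf of maximal depth $d=\max\setDef{v_k}{k\in\ints{n}}$; its parent has a second child, and that child cannot have proper descendants (these would have depth exceeding~$d$), hence is itself a leaf of depth $d$, producing two coordinates of $v$ equal to $d$. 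For part~(4) I would take a realizing tree for $w'\in\huffVecs{n-1}$, attach two new leaves below the leaf carrying label $n-1$, and give them labels $n-1$ and $n$; this gives a tree on $n$ leaves realizing $(w'_1,\dots,w'_{n-2},w'_{n-1}+1,w'_{n-1}+1)$, so (using $n\ge 3$) this vector lies in $\huffVecs{n}$.

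The substantive part is~(3), where the main obstacle is to reduce to the situation in which leaves $i$ and $j$ are siblings. Starting from a realizing tree $T$ for $v$ with $i<j$ and $v_i=v_j=d=\max\setDef{v_k}{k\in\ints{n}}$, the leaf labeled $i$ has depth $d$, so by the argument of part~(2) its sibling is a leaf as well, say with label $i'$ and (necessarily) depth $d$. If $i'\ne j$, I would swap the labels $i'$ and $j$; both leaves have depth $d$, so the resulting tree still realizes $v$, and now $i$ and $j$ are siblings. Deleting these two leaves turns their common parent into a leaf of depth $d-1$; labelling it $i$, leaving every other label in place, and composing with the order-preserving bijection $\ints{n}\setminus\{j\}\to\ints{n-1}$ yields a tree realizing exactly the asserted vector, which therefore lies in $\huffVecs{n-1}$.

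The one point I expect to need care is verifying that the tree constructed in part~(3) is again admissible: the deleted parent had precisely the two children $i$ and $j$, so it becomes a genuine leaf; every other node retains its two children; and $n\ge 3$ ensures both that this parent is not the root and that the resulting tree still has at least two leaves. Once this is checked, all four statements follow.
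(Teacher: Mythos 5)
Your proof is correct. The paper offers no proof of this observation (it is introduced with ``It is easy to see\dots''), and your direct tree-manipulation argument --- in particular the label swap in part~(3) that makes the leaves labeled $i$ and $j$ siblings before contracting them into their parent --- is exactly the intended justification and fills in the omitted details cleanly.
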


\noindent
For $n\ge 3$, let us define the embedding
\begin{equation*}
	P^{n-1}=\setDef{(x_1,\dots,x_{n-2},x_{n-1}+1,x_{n-1}+1)}{(x_1,\dots,x_{n-1})\in\huffPoly{n-1}}
\end{equation*}
of $\huffPoly{n-1}$ into~$\R^n$.

\begin{prop}
Let $\mathcal{R}\subseteq\R^n\times\R^n$ be the polyhedral relation  that is induced by the following sequence of transposition relations:
\begin{equation}\label{eq:TNSquare}
	\transRel{n-2}{n-1},
	\transRel{n-3}{n-2},
	\dots,
	\transRel{2}{3},
	\transRel{1}{2},
	\transRel{n-1}{n},
	\transRel{n-2}{n-1},
	\dots,
	\transRel{2}{3},
	\transRel{1}{2}
\end{equation}
 Then we have
	 $\mathcal{R}(P^{n-1})=\huffPoly{n}$.
\end{prop}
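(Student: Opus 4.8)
The plan is to apply Theorem~\ref{thm:seqReflRel} with $P = P^{n-1}$, $Q = \huffPoly{n}$, and $W = \huffVecs{n}$; since the sequence in~\eqref{eq:TNSquare} consists entirely of transposition relations $\transRel{k}{\ell}$, whose reflections $\transp{k}{\ell}$ are coordinate transpositions fixing the origin, both hypotheses need to be checked. For condition~\eqref{cond::seqReflRel1}, one needs $P^{n-1}\subseteq\huffPoly{n}$ and $\transp{k_i}{\ell_i}(\huffPoly{n})\subseteq\huffPoly{n}$ for every relation in the sequence. The inclusion $P^{n-1}\subseteq\huffPoly{n}$ is exactly Observation~\ref{obs:huff}(4) passed through convex hulls, and the permutation-invariance of $\huffPoly{n}$ is Observation~\ref{obs:huff}(1); so condition~\eqref{cond::seqReflRel1} is essentially immediate from Observation~\ref{obs:huff}.

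The substance is condition~\eqref{cond::seqReflRel2}: for every Huffman-vector $w\in\huffVecs{n}$ one must verify that the composition of canonical-preimage maps
\[
	\dirtransp{n-2}{n-1}\circ\dirtransp{n-3}{n-2}\circ\cdots\circ\dirtransp{1}{2}\circ\dirtransp{n-1}{n}\circ\dirtransp{n-2}{n-1}\circ\cdots\circ\dirtransp{1}{2}
\]
(read right-to-left as the sequence is applied) maps $w$ into $P^{n-1}$. I would analyze the two halves of~\eqref{eq:TNSquare} separately, working right-to-left. Recall $\dirtransp{k}{\ell}(y)$ swaps coordinates $k,\ell$ precisely when $y_k > y_\ell$, i.e.\ it is a single bubble-sort comparator. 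The first block $\transp{1}{2},\transp{2}{3},\dots,\transp{n-1}{n}$ applied right-to-left is one pass of bubble sort that moves a maximum-valued coordinate of $w$ into position $n$; by Observation~\ref{obs:huff}(2) there is a second coordinate equal to that maximum, and the second block $\transp{1}{2},\dots,\transp{n-2}{n-1}$ (acting on positions $1,\dots,n-1$) similarly pushes a maximum of the remaining $n-1$ coordinates into position $n-1$. So after both passes the resulting vector $w'$ has $w'_{n-1} = w'_n = \max_k w_k =: M$, and $(w'_1,\dots,w'_{n-1})$ is a permutation of the remaining coordinates of $w$. The key point is then that $w'$ lies in $P^{n-1}$: indeed $w'_{n-1}=w'_n=M$ and, by Observation~\ref{obs:huff}(3), $(w'_1,\dots,w'_{n-2},M-1)\in\huffVecs{n-1}\subseteq\huffPoly{n-1}$, which is exactly the condition defining membership of $(w'_1,\dots,w'_{n-2},M,M)$ in $P^{n-1}$.

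The main obstacle I anticipate is a careful, index-correct verification that the two comparator passes really do deposit the two top values in positions $n-1$ and $n$ — in particular that the first pass does not disturb position $n-1$ before the second pass reaches it, and that "ties at the maximum" are handled correctly by the strict inequality in the definition of $\dirtransp{k}{\ell}$ (a comparator leaves equal entries in place, so one must argue that after the first pass at least one remaining coordinate still equals $M$, which is where Observation~\ref{obs:huff}(2)'s "at least two" is essential). Once the bubble-sort bookkeeping is pinned down, conditions~\eqref{cond::seqReflRel1} and~\eqref{cond::seqReflRel2} of Theorem~\ref{thm:seqReflRel} are both satisfied, and the theorem yields $\relim{P^{n-1}}{\mathcal{R}} = \huffPoly{n}$, completing the proof.
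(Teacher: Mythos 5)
Your proposal is correct and follows essentially the same route as the paper: apply Theorem~\ref{thm:seqReflRel} with $P=P^{n-1}$, $Q=\huffPoly{n}$, $W=\huffVecs{n}$, get condition~(1) from parts~(1) and~(4) of Observation~\ref{obs:huff}, and get condition~(2) by observing that the two comparator passes place two copies of $\max_i w_i$ in positions $n-1$ and $n$ so that part~(3) of Observation~\ref{obs:huff} yields membership in $P^{n-1}$. You in fact spell out the bubble-sort bookkeeping and the role of the strict inequality in $\dirtransp{k}{\ell}$ more explicitly than the paper, which simply asserts $x_{n-1}=x_n=\max_i w_i$.
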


\begin{proof}
With $P=P^{n-1}$ and $Q=\huffPoly{n}$, the first condition of Theorem~\ref{thm:seqReflRel} is obviously satisfied (due to parts~(1) and~(4) of Observation~\ref{obs:huff}). We have $Q=\conv{W}$ with $W=\huffVecs{n}$. Furthermore, for every $w\in W$ and 
 $x=\tau^{\star}(w)$ with
\begin{equation}\label{eq:tauStarNSquare}
	\tau^{\star}=
	\dirtransp{n-2}{n-1}\circ
	\dirtransp{n-3}{n-2}\circ
	\dots\circ
	\dirtransp{2}{3}\circ
	\dirtransp{1}{2}\circ
	\dirtransp{n-1}{n}\circ
	\dirtransp{n-2}{n-1}\circ
	\dots\circ
	\dirtransp{2}{3}\circ
	\dirtransp{1}{2}\,,
\end{equation}
 we have $x_n=x_{n-1}=\max\setDef{w_i}{i\in\ints{n}}$, hence part~(3) of Observation~\ref{obs:huff} (with $i=n-1$ and $j=n$) implies $\tau^{\star}(w)\in P^{n-1}$. Therefore, the claim follows by Theorem~\ref{thm:seqReflRel}. 
\end{proof}

From Remark~\ref{rem:sizeSeqRefl} we thus obtain an extended formulation for~$\huffPoly{n}$ with $n'+2n-3$ variables and $f'+4n-6$ inequalities, provided we have an extended formulation for~$\huffPoly{n-1}$ with~$n'$ variables and~$f'$ inequalities. As $\huffPoly{2}$ is a single point, we thus can establish inductively the following result.
\begin{cor}
	There are extended formulations of~$\huffPoly{n}$ with $\bigO{n^2}$ variables and inequalities.  
\end{cor}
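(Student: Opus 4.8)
The plan is to prove the corollary by a straightforward induction on $n$, using the preceding proposition together with Remark~\ref{rem:sizeSeqRefl} at each step. First I would fix the base case: $\huffPoly{2}$ is a single point in $\R^2$ (the only binary tree on two leaves gives the vector $(1,1)$), so it trivially admits an extended formulation with $\bigO{1}$ variables and $\bigO{1}$ inequalities. For the inductive step, suppose $\huffPoly{n-1}$ has an extended formulation with $n'$ variables and $f'$ inequalities. The embedding $P^{n-1}\subseteq\R^n$ is an affine image of $\huffPoly{n-1}$, so it inherits an extended formulation with the same number $n'$ of variables and $f'$ of inequalities (composing the projection with the affine embedding map, exactly as in the affine-map special case of polyhedral relations discussed after Remark~\ref{rem:polyRelEF}).

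Next I would apply the proposition: $\huffPoly{n}=\mathcal{R}(P^{n-1})$, where $\mathcal{R}$ is induced by the sequence~\eqref{eq:TNSquare} of $2n-3$ transposition relations. Each transposition relation $\transRel{k}{\ell}$ is a reflection relation, so Remark~\ref{rem:sizeSeqRefl} (with $r=2n-3$) yields an extended formulation for $\huffPoly{n}$ with $n'+(2n-3)$ variables and $f'+2(2n-3)=f'+4n-6$ inequalities. Unwinding the recursion, the number of variables for $\huffPoly{n}$ is bounded by $\sum_{k=3}^{n}(2k-3)+\bigO{1}=\bigO{n^2}$, and similarly the number of inequalities is $\sum_{k=3}^{n}(4k-6)+\bigO{1}=\bigO{n^2}$, which gives the claim.

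There is essentially no obstacle here: the corollary is a bookkeeping consequence of the proposition and Remark~\ref{rem:sizeSeqRefl}. The one point that deserves a sentence of care is the passage from an extended formulation of $\huffPoly{n-1}$ to one of $P^{n-1}$ of the \emph{same} size, which is immediate because $P^{n-1}$ is the image of $\huffPoly{n-1}$ under an injective affine map and an extended formulation is preserved under composition with such a map. With that observation in place the induction closes and the quadratic bounds follow by summing the arithmetic progressions.
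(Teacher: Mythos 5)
Your proof is correct and follows essentially the same route as the paper: the proposition identifies $\huffPoly{n}$ as the image of $P^{n-1}$ under a sequence of $2n-3$ reflection relations, Remark~\ref{rem:sizeSeqRefl} turns this into an extended formulation with $n'+2n-3$ variables and $f'+4n-6$ inequalities given one for $\huffPoly{n-1}$, and induction from the single point $\huffPoly{2}$ gives the quadratic bounds. Your explicit remark that $P^{n-1}$ inherits an extended formulation of the same size as $\huffPoly{n-1}$ via the injective affine embedding is a detail the paper leaves implicit, but it is exactly the right justification.
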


Actually, one can reduce the size of the extended formulation of $\huffPoly{n}$ to $\bigO{n\log n}$. In order to indicate the necessary modifications, let us denote by $\Theta_k$ the sequence 
\begin{equation*}
	(k-2,k-1),(k-3,k-2),\dots,(2,3),(1,2),(k-1,k),(k-2,k-1),\dots,(2,3),(1,2)
\end{equation*}
of pairs of indices used (with $k=n$) in~\eqref{eq:TNSquare} and~\eqref{eq:tauStarNSquare}. For every sequence $\Theta=((i_1,j_1),\dots,(i_r,j_r))$ of pairs of pairwise different indices, we define 
	$\tau^{\star}_{\Theta}=\dirtransp{i_1}{j_1}\circ\cdots\circ\dirtransp{i_r}{j_r}$
(thus, $\tau^{\star}$ in~\eqref{eq:tauStarNSquare} equals $\tau^{\star}_{\Theta_n}$). Furthermore, we denote by $\eta_k:\R^k\rightarrow\R^{k-1}$ (for $k\ge 3$) the linear map defined via $\eta_k(y)=(y_1,\dots,y_{k-2},y_{k-1}-1)$ for all $y\in\R^k$. The crucial property for the above construction to work is that the following holds for every $v\in \huffVecs{n}$ and $k\in\{3,\dots,n\}$: The vector 
\begin{equation*}
	x=\tau^{\star}_{\Theta_{k}}\circ\eta_{k+1}\circ\tau^{\star}_{\Theta_{k+1}}\circ\cdots\circ
	\eta_n\circ\tau^{\star}_{\Theta_n}(v)
\end{equation*}
satisfies $x_{k-1}=x_{k}=\max\setDef{x_i}{i\in\ints{k}}$. It turns out that this property is preserved when replacing the sequence $\Theta_n$ by an arbitrary sorting network (e.g. of size $\bigO{n\log n}$, see Section~\ref{subsubsec:An}) and, for  $k\in\{3,\dots,n-1\}$, the sequence $\Theta_k$ (of length $2k-3$) by the  sequence
\begin{multline*}
	(i^k_2,i^k_1),(i^k_3,i^k_2),\dots,(i^k_{r(k)-1},i^k_{r(k)-2}),(i^k_{r(k)},i^k_{r(k)-1}),(i^k_{r(k)-1},i^k_{r(k)-2}),\dots,(i^k_3,i^k_2),(i^k_2,i^k_1)
\end{multline*}
with $i^k_1=k$, $i^k_2=k-1$, $i^k_{\ell}=i^k_{\ell-1}-2^{\ell-3}$ for all $\ell\ge 3$, and $r(k)$ being the maximal~$\ell$ with $i^k_{\ell}\ge 1$. As $r(k)$ is bounded by $\bigO{\log k}$ we obtain the following theorem, whose detailed proof will be included in the full version of the paper.

\begin{theorem}
	There are extended formulations of~$\huffPoly{n}$ with $\bigO{n\log n}$ variables and inequalities.  
\end{theorem}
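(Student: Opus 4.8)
The plan is to combine two reductions already implicit in the paper: first pass from the size-$\bigO{n^2}$ construction to an $\bigO{n\log n}$ one by replacing each of the building blocks by a shorter sorting-type sequence, and second verify that \emph{Theorem~\ref{thm:seqReflRel} still applies} after these replacements, so that correctness is preserved. Concretely, I would set up, for each $k\in\{3,\dots,n\}$, a short sequence $\Theta'_k$ of index-pairs replacing $\Theta_k$, with $\Theta'_n$ taken to be an AKS sorting network of length $\bigO{n\log n}$ (Section~\ref{subsubsec:An}), and $\Theta'_k$ (for $k<n$) the explicit length-$\bigO{\log k}$ sequence written in the excerpt, built from the indices $i^k_1=k,\ i^k_2=k-1,\ i^k_\ell=i^k_{\ell-1}-2^{\ell-3}$. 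The total length is $\sum_{k=3}^n\bigO{\log k}+\bigO{n\log n}=\bigO{n\log n}$, so by Remark~\ref{rem:sizeSeqRefl} (applied iteratively, once per value of $k$, interleaved with the affine maps $\eta_k$, which do not add inequalities) the resulting extended formulation has $\bigO{n\log n}$ variables and inequalities. Thus the size bound is the easy part; the content is the correctness.

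For correctness I would argue inductively on $n$, exactly mirroring the $\bigO{n^2}$ proof but with the key invariant stated in the excerpt: for every $v\in\huffVecs{n}$ and every $k\in\{3,\dots,n\}$, the vector
\begin{equation*}
	x=\tau^{\star}_{\Theta'_{k}}\circ\eta_{k+1}\circ\tau^{\star}_{\Theta'_{k+1}}\circ\cdots\circ\eta_n\circ\tau^{\star}_{\Theta'_n}(v)
\end{equation*}
satisfies $x_{k-1}=x_k=\max\setDef{x_i}{i\in\ints{k}}$. Granting this invariant, the verification that both hypotheses of Theorem~\ref{thm:seqReflRel} hold goes through verbatim as in the size-$\bigO{n^2}$ proposition: condition~\eqref{cond::seqReflRel1} (that $\huffPoly{n}$ is preserved under the relevant reflections and contains the embedded lower-dimensional copy) uses only parts~(1) and~(4) of Observation~\ref{obs:huff} and is independent of \emph{which} transpositions we use; condition~\eqref{cond::seqReflRel2} is precisely the statement that applying the canonical-preimage maps $\dirtransp{\cdot}{\cdot}$ in the order prescribed by $\Theta'_k$ lands a vertex of $\huffPoly{n}$ inside the embedded copy $P^{n-1}$, which follows from the invariant with $k=n$ together with part~(3) of Observation~\ref{obs:huff} (taking $i=n-1$, $j=n$). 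So essentially all the work is concentrated in proving the invariant.

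The main obstacle, therefore, is establishing the invariant for the \emph{shortened} sequences $\Theta'_k$. Two sub-claims are needed. First, replacing $\Theta_n$ by any sorting network is harmless: $\tau^{\star}_{\Theta'_n}(v)=\sort{v}$ for all $v$ by definition of a sorting network, and $\sort{v}$ has its two largest coordinates in positions $n-1,n$ because any Huffman vector has at least two coordinates attaining the maximum (Observation~\ref{obs:huff}(2)); this gives the invariant at $k=n$. Second, and this is the crux, I must show that the composite $\tau^{\star}_{\Theta'_k}\circ\eta_{k+1}$ has, on the relevant vectors, the same effect as $\tau^{\star}_{\Theta_k}\circ\eta_{k+1}$, namely it moves the two current largest coordinates (among the first $k$) into positions $k-1$ and $k$. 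The doubling pattern $i^k_\ell=i^k_{\ell-1}-2^{\ell-3}$ is designed so that the sequence acts like a ``binary-search'' comparator network: the forward pass $(i^k_2,i^k_1),(i^k_3,i^k_2),\dots$ sweeps a candidate maximum leftward-to-right through a geometrically spaced set of positions, guaranteeing that after the forward pass the global max sits in position $k=i^k_1$, and the backward pass $\dots,(i^k_3,i^k_2),(i^k_2,i^k_1)$ brings the second-largest into position $k-1$. Here one must exploit the special structure of the vectors arising in the recursion — after $\eta_{k+1}$ is applied to a vector whose top two entries were in positions $k,k+1$, the entry in position $k$ is decremented, and one needs to know enough about the gaps between consecutive sorted values of a Huffman vector (they differ by integers, and the recursion keeps the relevant coordinates ``almost sorted'') to be sure the sparse comparator network suffices rather than a full sorting network. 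Making this precise — identifying exactly which partial order on the first $k$ coordinates is maintained by the induction, and checking the $\bigO{\log k}$-length comparator network sorts the top two elements under that partial order — is the technical heart, and is exactly the part the authors defer to the full version. Everything else is bookkeeping already carried out for the $\bigO{n^2}$ case.
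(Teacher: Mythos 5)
Your proposal follows exactly the route the paper itself takes: keep the inductive scheme of the $\bigO{n^2}$ construction, replace $\Theta_n$ by an $\bigO{n\log n}$ sorting network and each $\Theta_k$ by the doubling sequence of length $\bigO{\log k}$ built from the indices $i^k_\ell$, and observe that the size bound is bookkeeping via Remark~\ref{rem:sizeSeqRefl} while correctness reduces to the single invariant that $x_{k-1}=x_{k}=\max\setDef{x_i}{i\in\ints{k}}$ after each stage, from which both conditions of Theorem~\ref{thm:seqReflRel} follow as in the $\bigO{n^2}$ case. On all of that you and the authors agree, and your observation that the invariant at $k=n$ is immediate from the sorting-network property together with part~(2) of Observation~\ref{obs:huff} is correct.

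The caveat is that the extended abstract proves nothing beyond this sketch: the verification of the invariant for the shortened sequences is explicitly deferred to the full version, and your proposal likewise stops at a heuristic (``binary-search comparator network'', ``almost sorted'') rather than a proof. So what you have is a faithful reconstruction of the paper's argument, and you have correctly localized the one genuinely missing piece, which is the lemma that for the vectors actually arising in the recursion --- whose precise near-sorted structure must itself be identified and carried along as part of the induction, using the integrality of Huffman vectors and the fact that $\eta_{k+1}$ decrements a single coordinate by $1$ --- the comparators at the geometrically spaced positions $i^k_{r(k)},\dots,i^k_2,i^k_1$ suffice to bring the two largest of the first $k$ coordinates into positions $k-1$ and $k$. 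Until that lemma is written out and verified, neither your argument nor the version in this abstract constitutes a complete proof of the theorem; everything else in your write-up is sound.
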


\section{Conclusions}
\label{sec:concl}

We hope to have demonstrated that and how 
the framework of reflection relations extends the currently available toolbox for constructing extended formulations. We conclude with briefly mentioning two directions for future research.

One of the most interesting questions in this context seems to be that for other polyhedral relations that can be useful for constructing extended formulations. In particular, what other types of affinely generated polyhedral relations are there? 

The reflections we referred to  are reflections at hyperplanes. It would be of great interest to find tools to deal with reflections at lower dimensional subspaces as well. This, however, seems to be much harder. In particular, it is unclear whether some concept similar to that of polyhedral relations can help here.

 \bibliographystyle{plain}

\end{document}